\theoremstyle{plain}
\newtheorem{theorem}{Theorem}
\newtheorem{lemma}{Lemma}
\def\bfalpha{\boldsymbol{\alpha}}
\def\bfbeta{\boldsymbol{\beta}}
\def\bfgamma{\boldsymbol{\gamma}}
\def\bfa{\boldsymbol{a}}
\def\bfw{\boldsymbol{w}}
\def\bfx{\boldsymbol{x}}
\def\bfy{\boldsymbol{y}}
\def\bfz{\boldsymbol{z}}
\def\sgn{\textrm{sgn}}
\begin{document}
\bibliographystyle{plain}

\title[On solution-free sets for simultaneous polynomials]{On solution-free sets for simultaneous diagonal polynomials}
\author{Matthew L. Smith}
\date{}
\address{Department of Mathematics\\
University of British Columbia\\
1984 Mathematics Drive \\
Vancouver, BC V6T 1Z2, CANADA}
\email{msmith@math.ubc.ca}
\thanks{The author was partially supported by NSF grant DMS-0601367.}
\subjclass[2010]{Primary 11P55, 11B75, 11D41}
\keywords{Solution-free sets, translation invariance, dilation invariance, uniformity of degree $k$, Hardy-Littlewood method.}

\begin{abstract}
We consider a translation and dilation invariant system consisting of $k$ diagonal equations of degrees $1,2,\ldots,k$ with integer coefficients in $s$ variables, where $s$ is sufficiently large in terms of $k$.  We show via the Hardy-Littlewood circle method that if a subset $\mathcal{A}$ of the natural numbers restricted to the interval $[1,N]$ satisfies Gowers' definition of uniformity of degree $k$, then it furnishes roughly the expected number of simultaneous solutions to the given equations.  If $\mathcal{A}$ furnishes no non-trivial solutions to the given system, then we show that the number of elements in $\mathcal{A}\cap[1,N]$ grows no faster than a constant multiple of $N/(\log\log N)^{-c}$ as $N\rightarrow\infty$, where $c>0$ is a constant dependent only on $k$.  In particular, we show that the density of $\mathcal{A}$ in $[1,N]$ tends to 0 as $N$ tends to infinity.
\end{abstract}

\maketitle

\section{Introduction}
Having considered the problem of solution-free sets for a system consisting of a quadratic equation and a linear equation in \cite{smith}, we now consider the system of $k\geq 2$ equations
\begin{equation}
L_j(x_1,\ldots,x_s) = \lambda_1 x_1^j+\cdots+\lambda_s x_s^j = 0\qquad(1\leq j\leq k).
\label{equation1}
\end{equation}
The coefficients $\lambda_1,\ldots,\lambda_s\in\mathbb{Z}$ are fixed and satisfy
\begin{displaymath}
\lambda_1+\cdots+\lambda_s=0,
\end{displaymath}
ensuring that the system (\ref{equation1}) is translation and dilation invariant with respect to the $x_i$, as the reader may easily verify.  For any set $\mathcal{A}\subset\mathbb{N}$, we define
\begin{displaymath}
\mathcal{A}_N=\mathcal{A}\cap[1,N],\quad\quad\delta_N=|\mathcal{A}_N|/N.
\end{displaymath}
As in \cite{smith}, we are interested in the upper density $\lim\sup_{N\rightarrow\infty}\delta_N$ of a set $\mathcal{A}$ furnishing no non-trivial solutions to the system (\ref{equation1}), and in particular whether or not this upper density is zero for such a set.

As in the case $k=2$, for any $N\in\mathbb{N}$, we may obtain solutions to (\ref{equation1}) in $\mathcal{A}_N^s$ by setting $x_1=x_2=\ldots=x_s$.  If further the set of coefficients $\{\lambda_1,\ldots,\lambda_s\}$ may be partitioned into $r$ non-intersecting sets
\begin{displaymath}
\left\{\lambda_{i,1},\ldots,\lambda_{i,\rho(i)}\right\}\qquad(1\leq i\leq r),
\end{displaymath}
such that $\rho(1)+\ldots+\rho(r)=s$ and
\begin{equation}
\lambda_{i,1}+\ldots+\lambda_{i,\rho(i)}=0\qquad (1\leq i\leq r),
\label{equation6}
\end{equation}
then we may generate additional solutions by setting $x_{i,1}=\ldots=x_{i,\rho(i)}$ for $1\leq i\leq r$.  We refer to such solutions as trivial solutions.  We note that as the set of coefficients may be partitioned into at most $s/2$ sets of two or more elements, and that at most $[s/2]!$ such partitions exist such that each subset satisfies the criterion (\ref{equation6}), the number of trivial solutions is at most $[s/2]! |\mathcal{A}_N|^{s/2}$.

As in \cite{smith}, our goal is twofold.  First, we wish to find an upper bound for the density $\delta_N$ of a maximal subset $\mathcal{A}_N \subset [1,N]$ which furnishes no non-trivial solutions in $\mathcal{A}_N^s$ to the system (\ref{equation1}).  Second, if instead $\mathcal{A}_N$ does furnish non-trivial solutions to the system (\ref{equation1}), then we seek to obtain a lower bound for the number $\mathcal{N}$ of solutions which is consistent with the anticipated product of local densities.

We first define
\begin{equation}
s_0(k)=2k\left[k(\log k+2\log\log k)\right]+10k^2+6.
\label{equation37}
\end{equation}
We note that the quantity $s_0(k)/2$ is the minimum number of variables currently required to establish the asymptotic bound in Vinogradov's mean value theorem (see, for example, $\S$3 of \cite{wooley}).

We also define a \textit{non-singular solution} to the system (\ref{equation1}) to be a solution $\bfa=(a_1,\ldots,a_s)$ such that there exists at least one subset $\{i_1,\ldots,i_k\}$ of the indices $\{1,\ldots,s\}$ such that
\begin{equation}
\Delta\left(i_1,\ldots,i_k\right)=\left.\det\left(\frac{\partial L_j}{\partial x_{i_l}}\right)\right|_{\bfa}\neq 0,
\label{equation40}
\end{equation}
that is, the Jacobian associated to the variables $x_{i_1},\ldots,x_{i_k}$ is non-zero when evaluated at $\bfa$.  We note that since
\begin{displaymath}
\Delta\left(i_1,\ldots,i_k\right)=k!\lambda_{i_1}\cdots\lambda_{i_k}\prod_{1\leq u<v\leq k}(x_{i_u}-x_{i_v}),
\end{displaymath}
the solution $\bfa$ is non-singular if the $a_i$ take on at least $k$ different values.

\begin{theorem}
Suppose that $s>s_0(k)$, and that $\mathcal{A}^s$ contains no non-trivial solutions to the system (\ref{equation1}).  Suppose further that the system (\ref{equation1}) possesses both a non-singular real solution and a non-singular $p$-adic solution for all rational primes $p$.  Then for $N$ sufficiently large in terms of the $\lambda_i$, there exists a constant $c>0$ dependent on $k$ such that $\delta_N\ll(\log\log N)^{-c}$.
\label{thm1}
\end{theorem}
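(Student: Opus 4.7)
The strategy follows the density-increment framework I developed for the case $k=2$ in \cite{smith}, now driven by Gowers' uniformity of degree $k$ in place of the $U^3$-norm. I first introduce the generating function
$$f_\mathcal{A}(\bfalpha) = \sum_{x \in \mathcal{A}_N} e(\alpha_1 x + \alpha_2 x^2 + \cdots + \alpha_k x^k),$$
so that the count of solutions to (\ref{equation1}) in $\mathcal{A}_N^s$ is $\mathcal{N}(\mathcal{A}) = \int_{[0,1)^k} \prod_{i=1}^{s} f_\mathcal{A}(\lambda_i \bfalpha)\, d\bfalpha$. Partitioning $[0,1)^k$ into major and minor arcs at a scale adapted to $s_0(k)$, the hypothesis $s > s_0(k)$ permits Vinogradov's mean value theorem to be brought to bear on the minor arcs, while the major arcs should produce the expected main term $\mathfrak{S} J \delta_N^s N^{s - k(k+1)/2}$, in which the singular series $\mathfrak{S}$ and singular integral $J$ are strictly positive by virtue of the non-singular real and $p$-adic solubility hypotheses.

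The central analytic step is a Gowers--von Neumann-type transference: if the balanced function of $\mathcal{A}_N$ has $U^{k+1}$-norm at most $\eta$, then iterated Cauchy--Schwarz and van der Corput differencing should allow the minor-arc integral to be dominated by $\eta$ times the trivial mean-square bound, giving
$$\mathcal{N}(\mathcal{A}) = \mathfrak{S} J \delta_N^s N^{s - k(k+1)/2} + O\!\left( \eta\, \delta_N^{s-1} N^{s - k(k+1)/2}\right).$$
Since $\mathcal{A}$ furnishes no non-trivial solutions, the earlier bound $\mathcal{N}(\mathcal{A}) \leq [s/2]!\,\delta_N^{s/2} N^{s/2}$ must hold. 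For any $\delta_N$ bounded below by a sufficiently small inverse power of $N$, comparing these two estimates forces an inconsistency as soon as $\eta$ is small enough in terms of $\delta_N$, so a solution-free set of density $\delta_N$ must fail to be uniform of degree $k$ with a quantitative parameter $\eta = \eta(\delta_N)$.

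Finally, I invoke an inverse theorem for the $U^{k+1}$-norm in the interval setting to convert this failure of uniformity into a density increment: there is an arithmetic sub-progression $P \subset [1,N]$ of length $|P| \gg N^{\kappa}$ on which $\mathcal{A}$ has relative density at least $(1 + c_1)\delta_N$, with $\kappa, c_1 > 0$ depending only on $k$. Because the system (\ref{equation1}) is translation- and dilation-invariant, the entire argument rescales to $P$, and the density roughly doubles after $O(\delta_N^{-1})$ iterations; tracking the shrinkage of the working interval across iterations and optimising yields $\delta_N \ll (\log\log N)^{-c}$ for some $c > 0$ depending only on $k$. The main obstacle I anticipate is the transference step itself: the iterated differencing needed to control the minor-arc integral by the $U^{k+1}$-norm of the balanced function must be performed in a way that remains compatible with the Weyl-type savings delivered by Vinogradov's mean value theorem, and calibrating these two inputs so that the losses incurred in the differencing do not overwhelm the savings available under the hypothesis $s > s_0(k)$ will be the technical heart of the proof.
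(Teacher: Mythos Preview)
Your overall framework---density increment via Gowers uniformity, with the circle method supplying the main term and Vinogradov's mean value theorem controlling moments---is the same as the paper's. Two points in your execution are worth correcting, and the first of them actually dissolves the obstacle you anticipate.

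First, the ``transference'' step is much simpler than you suggest. The paper does \emph{not} dissect $\mathcal{N}(\mathcal{A})$ into major and minor arcs. Instead it compares $\mathcal{N}$ to $\delta_N^s I$, where $I$ is the count over $[1,N]^s$, and bounds the difference by $\sup_{\bfalpha}|E(\bfalpha)|$ times a mean value (your $\delta_N^{s-1}$ should really be $\delta_N^{s-1-s_0(k)}$, since the $s_0(k)$ copies absorbed into the Vinogradov integral lose their density). The crucial observation is Lemma~\ref{lem9}: because the phase $\alpha_k x^k+\cdots+\alpha_1 x$ has degree $k$, applying $k{+}1$ Weyl differencings to $E(\bfalpha)$ annihilates the exponential entirely, leaving exactly the sum defining the $U^{k+1}$-norm of the balanced function. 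This yields the \emph{pointwise} bound $|E(\bfalpha)|\le 2\mathfrak{a}^{2^{-k-1}}N$ uniformly in $\bfalpha$, with no need to calibrate differencing against minor-arc savings. The arc dissection is then applied only to the clean integral $I$, which involves $g$ alone and is handled by the standard circle-method machinery.

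Second, your statement of the density increment is too strong. The progression length exponent $\kappa$ and the increment $c_1$ do \emph{not} depend only on $k$; in Gowers' Theorem~18.2 they depend on the uniformity parameter $\mathfrak{a}$, which here is a power of $\delta_N$. Concretely the paper obtains a progression of length $\ge N^{\mathfrak b}$ with density at least $\delta_N+\mathfrak b$, where $\mathfrak b=\mathfrak a^{2^{2^{k+8}}}$ and $\mathfrak a\asymp\delta_N^{2^{k+1}(s_0(k)+2)}$. This $\delta_N$-dependence is precisely what forces the final bound to be $(\log\log N)^{-c}$ rather than the $(\log N)^{-c}$ your version of the iteration would yield.
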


We show in our proof of Theorem \ref{thm1} that we may take $c=2^{-2^{k+9}}$.

\begin{theorem}
Suppose that $s>s_0(k)$, and suppose that the system (\ref{equation1}) possesses both a non-singular real solution and a non-singular $p$-adic solution for all rational primes $p$.  There exists a constant $K$, dependent at most on the $\lambda_i$, with the following property.  Suppose the set $\mathcal{A}_N$ has cardinality $\delta_N N$ and is $\mathfrak{a}$-uniform of degree $k$, where the parameter $\mathfrak{a}$ obeys the upper bound
\begin{equation}
\mathfrak{a}\leq K \delta_N^{2^{k+1}(s_0(k)+2)}.
\label{equation34}
\end{equation}
Then for $N$ sufficiently large in terms of the $\lambda_i$, the set $\mathcal{A}_N^s$ necessarily contains non-trivial solutions to the system $L_k=\ldots=L_1=0$, and the number $\mathcal{N}$ of solutions in $\mathcal{A}_N^s$ satisfies the lower bound
\begin{equation}
\mathcal{N}\gg\delta_N^s N^{s-k(k+1)/2},
\label{equation35}
\end{equation}
where the implicit constant is dependent at most on the $\lambda_i$ and $k$.
\label{thm2}
\end{theorem}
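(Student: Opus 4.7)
My plan is to establish Theorem \ref{thm2} by combining a standard Hardy--Littlewood major/minor arc dissection with a generalised von Neumann inequality governed by Gowers uniformity of degree $k$, following the template laid down in \cite{smith} for the case $k=2$.  Define the exponential sums
\[
f(\bfalpha) = \sum_{n \in \mathcal{A}_N} e(\alpha_1 n + \alpha_2 n^2 + \cdots + \alpha_k n^k), \qquad F(\bfalpha) = \sum_{n=1}^{N} e(\alpha_1 n + \cdots + \alpha_k n^k).
\]
By orthogonality the quantity of interest is
\[
\mathcal{N} = \int_{\mathbb{T}^k} \prod_{i=1}^{s} f(\lambda_i \bfalpha) \, d\bfalpha,
\]
and I dissect $\mathbb{T}^k$ into major arcs $\mathfrak{M}$ and minor arcs $\mathfrak{m}$ with widths chosen so that the Vinogradov-type mean value theorem (applicable because $s > s_0(k)$, as noted just after \eqref{equation37}) delivers an acceptable minor-arc bound for $\int \prod F(\lambda_i\bfalpha) \, d\bfalpha$.

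Next I write $f = \delta_N F + g$ with $g = f - \delta_N F$, and expand the $s$-fold product as a principal term (every factor equal to $\delta_N F$) plus $2^s - 1$ error terms containing at least one factor of $g$.  For the principal term, the circle method yields
\[
\delta_N^s \int_{\mathbb{T}^k} \prod_{i=1}^{s} F(\lambda_i\bfalpha) \, d\bfalpha = \delta_N^s \bigl(\mathfrak{S}\mathfrak{J} + o(1)\bigr) N^{s - k(k+1)/2},
\]
where the singular series $\mathfrak{S}$ and singular integral $\mathfrak{J}$ are positive under the hypothesis of non-singular real and $p$-adic solutions, their absolute convergence being ensured by $s > s_0(k)$.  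This supplies the main term of the desired size $\gg \delta_N^s N^{s - k(k+1)/2}$.

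To handle a typical error term I select an index $i_0$ at which the factor is $g$ and perform $k+1$ successive Cauchy--Schwarz steps in the variable $n_{i_0}$, introducing shift variables $h_1, \ldots, h_{k+1}$.  Each step differences the polynomial phase $\alpha_1 n + \cdots + \alpha_k n^k$ and lowers its degree by one; at the final step one recovers an expression controlled by the Gowers uniformity norm of degree $k$ of the balanced counting function of $\mathcal{A}_N$.  The residual averages over the remaining $s - 1$ variables are handled by Hölder combined with the Vinogradov mean value for $F$.  The upshot is a bound
\[
C\, \mathfrak{a}^{2^{-(k+1)}}\, \delta_N^{-(s_0(k)+1)}\, N^{s - k(k+1)/2}
\]
on each error term, with $C = C(\lambda_i, k)$; under the hypothesis (\ref{equation34}) this is a small fraction of the main term, and summing over the $2^s - 1$ error terms yields the lower bound (\ref{equation35}).

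The most delicate step will be the generalised von Neumann argument itself.  Because the $k$ equations have mixed degrees $1, 2, \ldots, k$, the Cauchy--Schwarz iteration must be arranged so that the leading degree-$k$ monomial in the phase survives exactly until the final differencing, producing a genuine uniformity norm of degree $k$ rather than a lower-order Gowers object; the linear coefficients $\lambda_i$ tied to each factor must also be preserved throughout the combinatorial bookkeeping.  The exponent $2^{k+1}(s_0(k)+2)$ in (\ref{equation34}) is calibrated precisely to absorb both the $k+1$ doublings of variables from Cauchy--Schwarz and the polynomial loss in $\delta_N$ incurred by Hölder against the $2s_0(k)$-th moment bound for $F$, and tracking this exponent faithfully through the argument will be the principal technical burden.
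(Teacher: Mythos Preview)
Your proposal follows the same overall architecture as the paper: write $f=\delta_N F+g$, extract the main term $\gg\delta_N^s N^{s-k(k+1)/2}$ from a circle-method analysis of $\int_{\mathbb{T}^k}\prod_i F(\lambda_i\bfalpha)\,d\bfalpha$ (with $\mathfrak{S},\mathcal{J}>0$ supplied by the local solubility hypotheses), and control the error through Gowers uniformity together with Vinogradov's mean value. Two points of comparison deserve mention.

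First, the error estimate in the paper is considerably simpler than you anticipate. There is no multilinear generalised von Neumann argument. The paper telescopes $\prod_i f_i-\prod_i \delta_N F_i$ into $s$ (not $2^s$) terms, pulls $\sup_{\bfalpha}|g(\bfalpha)|$ out of each by the trivial inequality, and then bounds this supremum by applying $k+1$ steps of Weyl differencing to the single-variable sum $g(\bfalpha)=\sum_x\mathcal{E}_N(x)\,e(\alpha_kx^k+\cdots+\alpha_1x)$. Since the phase is a polynomial of degree $k$ in $x$, the $(k{+}1)$-fold differencing annihilates it identically, leaving exactly the $U^{k+1}$-type sum in the definition of $\mathfrak{a}$-uniformity; one obtains $|g(\bfalpha)|\leq 2\mathfrak{a}^{2^{-k-1}}N$ uniformly in $\bfalpha$. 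The coefficients $\lambda_i$ never interact, there is no issue of arranging which degree survives to which stage, and no combinatorial bookkeeping across factors: the complications you flag as ``the most delicate step'' simply do not arise.

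Second, to recover the exponent $2^{k+1}(s_0(k)+2)$ in \eqref{equation34} \emph{independent of $s$}, the paper, after extracting the single factor $g$, bounds $s-1-s_0(k)$ of the remaining factors trivially by $|f|,\,|\delta_N F|\leq\delta_N N$ before invoking the Vinogradov mean value on the last $s_0(k)$ factors. Your plan (H\"older plus Vinogradov on all $s-1$ remaining factors, combined with a full $2^s$-term expansion in which many terms carry several copies of $g$ and hence no $\delta_N$ savings) would instead force a condition of the shape $\mathfrak{a}\ll\delta_N^{2^{k+1}s}$, which is not what \eqref{equation34} asserts. Incorporating the trivial $\delta_N N$ bound on the surplus factors, and using a telescoping rather than a binomial expansion, is what makes your stated error bound and the hypothesis \eqref{equation34} line up.
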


We note that if $\delta_N\ll N^{-1+k(k+1)/s}$, then one can show that the trivial solutions alone contribute more than the lower bound given in (\ref{equation35}).  We note also that if $s$ is sufficiently large in terms of $k$, then we anticipate that, in the presence of suitable conditions on the $\lambda_i$, the local solubility hypotheses may be removed from the statements of Theorems \ref{thm1} and \ref{thm2}.

We recall that Erd\H{o}s and Tur\'{a}n \cite{erdosturan} conjectured in 1936 that a set $\mathcal{A}\subset\mathbb{N}$ with positive upper density necessarily contains arbitrarily long arithmetic progressions, or, equivalently, solutions in $\mathcal{A}^k$ to the system of $k-2$ translation and dilation invariant linear equations
\begin{displaymath}
x_j-2x_{j+1}+x_{j+2}=0\qquad (1\leq j\leq k-2)
\end{displaymath}
for any $k\geq 3$.  In 1953, Roth \cite{roth1} used a variation of the Hardy-Littlewood circle method to prove that a set $\mathcal{A}$ containing no three-term progressions satisfies $\delta_N\ll(\log\log N)^{-1}$.  The methods of Roth's proof have been successively refined by Heath-Brown \cite{h-b}, Szemer\'{e}di \cite{szem3}, and Bourgain \cite{bourgain2}, \cite{bourgain3}, the latter of whom has obtained the current strongest bound of $\delta_N\ll(\log\log N)^2(\log N)^{-2/3}$.

Szemer\'{e}di proved the Erd\H{o}s-Tur\'{a}n conjecture for the case $k=4$ in 1969 \cite{szem1} via a combinatorial argument.  He followed this result with an elementary proof of the conjecture for general $k$ in 1975 \cite{szem2}, although neither proof resulted in a reasonable explicit upper bound for $\delta_N$ as $N\rightarrow\infty$.  It was not until 2001 that Gowers \cite{gowers2}, using the method of exponential sums, was able to give the first explicit upper bound of $\delta_N\ll(\log\log N)^{-c}$, where $c$ is a positive constant dependent on $k$.  The crucial new device in Gowers' proof is a new notion of pseudorandomness based on polynomial uniformity of degree $d$.  Gowers showed that a set which is sufficiently pseudorandom under this new definition necessarily contains an arithmetic progression of length $k$, while a set which is not sufficiently pseudorandom may be shown to be unusually dense in some long arithmetic progression $\mathcal{P}$.

As well as obtaining an upper bound on $\delta_N$ when $\mathcal{A}_N^s$ contains no non-trivial solutions to the system (\ref{equation1}), we are also interested in obtaining a lower bound on the number of solutions $\mathcal{N}$ in $\mathcal{A}_N^s$ to the system (\ref{equation1}) when the uniformity parameter $\mathfrak{a}$ is sufficiently small in terms of the density $\delta_N$.  To this end, we employ some of the same techniques used by Vinogradov \cite{vin2} and Hua \cite{hua2} to show that when $s$ is sufficiently large in terms of $k$, the number of solutions $\mathcal{N}$ in $[1,N]^{2s}$ to the translation and dilation invariant system
\begin{displaymath}
x_1^j+\ldots+x_s^j=y_1^j+\ldots+y_s^j\qquad (1\leq j\leq k)
\end{displaymath}
satisfies $\mathcal{N}\ll N^{2s-k(k+1)/2}$, where the implied constant depends on $k$ and $s$.  This problem is associated with Vinogradov's mean value theorem.  We also use some of the same methods used to find integer solutions $(x_1,\ldots,x_s)$ to the system
\begin{equation}
x_1^j+\ldots+x_s^j=N_j\qquad(1\leq j\leq k),
\label{equation38}
\end{equation}
where $N_1,\ldots,N_k$ are integers satisfying certain size conditions.  The problem of establishing a lower bound on $s$ in terms of $k$ such that the system (\ref{equation38}) possesses solutions in $\mathbb{N}^s$ for $N_1,\ldots,N_k$ satisfying appropriate conditions is known as the \textit{Hilbert-Kamke problem}.

Our proof broadly combines the Hardy-Littlewood circle method approach to counting three-term arithmetic progressions as described in \cite{roth1}, and the approach of Gowers to the general case of Szemer\'{e}di's theorem in \cite{gowers2}.  We first assume that the set $\mathcal{A}_N$ is $\mathfrak{a}$-uniform of degree $k$ according to the definition in \cite{gowers2} for $\mathfrak{a}$ appropriately bounded in terms of $\delta_N$.  In $\S$2, we approximate the number of solutions to (\ref{equation1}) in $\mathcal{A}_N^s$ by considering instead the number of solutions in $[1,N]^s$.  We use the methods developed by Gowers in \cite{gowers2} to estimate the error involved in this approximation in $\S$3.  After introducing some definitions and technical results from the Hardy-Littlewood circle method and Vinogradov's mean value theorem in $\S$4, we apply Vinogradov's result directly to complete our error estimate in $\S$5, while in $\S$6 we apply the translation and dilation invariance of the system (\ref{equation1}) and the Hardy-Littlewood method to count the number of solutions in $[1,N]^s$.  We then combine the resulting estimates in $\S$7 to obtain a lower bound for $\mathcal{N}$ in the uniform case for $\mathfrak{a}$ suitably bounded above in terms of $\delta_N$.  If the set $\mathcal{A}_N$ fails to be $\mathfrak{a}$-uniform of degree $k$ for $\mathfrak{a}$ suitably bounded above, we show the set is concentrated in a long arithmetic progression.  By iterating the concentration process, we obtain the upper bound for $\delta_N$ given in Theorem \ref{thm1}.

In this paper, $\ll$ and $\gg$ denote the familiar Vinogradov notation, and we use the familiar shorthand $e(x)$ for the exponential function $\exp(2\pi ix)$ and $e_q(x)$ for $\exp(2\pi ix/q)$, while $\mathbb{T}$ denotes the unit interval $[0,1)$.  Unless otherwise indicated, a boldface character denotes a $k$-dimensional vector, such as $\bfalpha$ for $(\alpha_k,\ldots,\alpha_1)$.  All statements involving the variable $\epsilon$ are assumed to hold for all values of $\epsilon>0$.

\vspace{4mm}

\section{Setup and notation}
Our basic approach to counting solutions to the system (\ref{equation1}) with $\bfx\in\mathcal{A}_N^s$ is by means of the Hardy-Littlewood circle method.  With this in mind, we define
\begin{displaymath}
f(\bfalpha)=\sum_{x\in\mathcal{A}_N}e(\alpha_k x^k+\ldots+\alpha_1 x),\qquad f_i(\bfalpha)=f(\lambda_i\bfalpha).
\end{displaymath}
It will often be convenient to write simply $f_i$ for $f_i(\bfalpha)$.  By orthogonality, the number $\mathcal{N}$ of solutions to (\ref{equation1}) in $\mathcal{A}_N^s$ may be written as
\begin{equation}
\mathcal{N}=\int_{\mathbb{T}^k}\prod_{i=1}^s f(\lambda_i\bfalpha)\, d\bfalpha=\int_{\mathbb{T}^k}\prod_{i=1}^s f_i\, d\bfalpha.
\label{equation3}
\end{equation}

Our aim is to establish a lower bound for $\mathcal{N}$ of the form $\mathcal{N}\gg\delta_N^s N^{s-k(k+1)/2}$, where $\delta_N$ is the density of $\mathcal{A}_N$.  In order to obtain an asymptotic approximation to the integral in (\ref{equation3}) with the expected main term, we require the definitions
\begin{displaymath}
\begin{array}{rclrcl}
g(\bfalpha)&=&\displaystyle{\sum_{1\leq x\leq N}e(\alpha_k x^k+\ldots+\alpha_1 x)}, & g_i(\bfalpha)&=&g(\lambda_i\bfalpha),\vspace{1mm}\\
v(\bfalpha)&=&\delta_N g(\bfalpha), & v_i(\bfalpha)&=&\delta_N g_i(\bfalpha),\vspace{1mm}\\
E(\bfalpha)&=&v(\bfalpha)-f(\bfalpha), & E_i(\bfalpha)&=&v_i(\bfalpha)-f_i(\bfalpha).
\end{array}
\end{displaymath}
As in the case of $f_i$, it will usually be convenient to omit the input variables for functions $g_i$, $v_i$, and $E_i$ in subsequent discussion.

To estimate the integral in (\ref{equation3}), we approximate the generating functions $f_i$ with the exponential sums $v_i$ and estimate both the new integral and the error involved in this approximation.  If we define
\begin{equation}
I=\int_{\mathbb{T}^k}\prod_{i=1}^s g_i\, d\bfalpha,
\label{equation30}
\end{equation}
then it follows by the triangle inequality that
\begin{displaymath}
\left|\delta_N^s\cdot I-\mathcal{N}\right|\ll\displaystyle{\int_{\mathbb{T}^k}\left|\prod_{i=1}^s v_i-\prod_{i=1}^s f_i\right| \, d\bfalpha}\ll \sum_{j=1}^s I_j,
\end{displaymath}
where
\begin{displaymath}
\begin{array}{lll}
I_1&=& \displaystyle{\int_{\mathbb{T}^k}\left|\left(v_1-f_1\right)\cdot\prod_{i=2}^s v_i\right| \, d\bfalpha},\vspace{1mm}\\
I_j &=& \displaystyle{\int_{\mathbb{T}^k}\left|\left(v_j-f_j\right)\cdot\prod_{i=1}^{j-1} v_i\prod_{i=j+1}^s f_i\right| \, d\bfalpha}\qquad (2\leq j\leq s-1),\vspace{1mm}\\
I_s &=& \displaystyle{\int_{\mathbb{T}^k}\left|\left(v_s-f_s\right)\cdot\prod_{i=1}^{s-1} f_i\right| \, d\bfalpha}.
\end{array}
\end{displaymath}

It is convenient at this point to introduce the notation
\begin{displaymath}
J(\Theta)=\int_{\mathbb{T}^k}\left|\Theta(\bfalpha)\right|^{s-1}\, d\bfalpha
\end{displaymath}
for the $(s-1)^{st}$ moment integral of a function $\Theta$.  Now, by the trivial inequality $|a_1\cdots a_t|\ll |a_1|^t+\ldots+|a_t|^t$, we obtain
\begin{displaymath}
I_1\ll\sup_{\bfalpha\in\mathbb{T}^k}\left|E_1\right|\cdot\sum_{i=2}^s \int_{\mathbb{T}^k}\left|v_i\right|^{s-1} \, d\bfalpha\ll\sup_{\bfalpha\in\mathbb{T}^k}\left|E(\bfalpha)\right|\cdot J(v),
\end{displaymath}
where the implied constant depends on the $\lambda_i$.  Similarly,
\begin{displaymath}
I_s\ll\sup_{\bfalpha\in\mathbb{T}^k}\left|E_s\right|\cdot\sum_{i=1}^{s-1} \int_{\mathbb{T}^k}\left|f_i\right|^{s-1} \, d\bfalpha\ll\sup_{\bfalpha\in\mathbb{T}^k}\left|E(\bfalpha)\right|\cdot J(f).
\end{displaymath}
Meanwhile, for fixed $2\leq j\leq s-1$, we have
\begin{eqnarray*}
I_k&\ll&\displaystyle{\sup_{\bfalpha\in\mathbb{T}^k}\left|E_k\right|\int_{\mathbb{T}^k}\sum_{i=1}^{j-1}\left|v_i\right|^{s-1}+\sum_{i=j+1}^s\left|f_i\right|^{s-1} \, d\bfalpha}\vspace{1mm}\\
&\ll&\displaystyle{\sup_{\bfalpha\in\mathbb{T}^k}\left|E(\bfalpha)\right|\cdot\left(J(v)+J(f)\right)}.
\end{eqnarray*}
We therefore infer that
\begin{equation}
\left|\delta_N^s\cdot I-\mathcal{N}\right|\ll\left(\sup_{\bfalpha\in\mathbb{T}^k}|E(\bfalpha)|\right)\cdot\left(J(v)+J(f)\right).
\label{equation16}
\end{equation}

Now, by the trivial estimates,
\begin{displaymath}
J(v)\ll\left(\delta_N N\right)^{s-1-s_0(k)}\int_{\mathbb{T}^k}\left|v(\bfalpha)\right|^{s_0(k)} \, d\bfalpha
\end{displaymath}
and
\begin{displaymath}
J(f)\ll\left(\delta_N N\right)^{s-1-s_0(k)}\int_{\mathbb{T}^k}\left|f(\bfalpha)\right|^{s_0(k)} \, d\bfalpha,
\end{displaymath}
where $s_0(k)$ is as defined in (\ref{equation37}).  By the underlying Diophantine equations, we observe that
\begin{displaymath}
\int_{\mathbb{T}^k}\left|f(\bfalpha)\right|^{s_0(k)} \, d\bfalpha\ll\int_{\mathbb{T}^k}\left|g(\bfalpha)\right|^{s_0(k)} \, d\bfalpha.
\end{displaymath}
It follows that if we define
\begin{displaymath}
J=\int_{\mathbb{T}^k} \left|g(\bfalpha)\right|^{s_0(k)} \, d\bfalpha,
\end{displaymath}
then by the preceding discussion and the definition of $v(\bfalpha)$, we have
\begin{displaymath}
J(v)+J(f)\ll\left(\delta_N N\right)^{s-1-s_0(k)}\cdot J.
\end{displaymath}
Substituting this expression into (\ref{equation16}) yields
\begin{equation}
\left|\delta_N^s\cdot I-\mathcal{N}\right|\ll\left(\sup_{\bfalpha\in\mathbb{T}^k}|E(\bfalpha)|\right)\cdot\left(\delta_N N\right)^{s-1-s_0(k)}\cdot J.
\label{equation11}
\end{equation}

To obtain a lower bound for $\mathcal{N}$, we therefore require an upper bound for the error $E(\bfalpha)$, an upper bound for the moment integral $J$, and a lower bound for the approximation integral $I$.

\vspace{4mm}

\section{The error $E(\bfalpha)$ via uniformity of degree $k$}
We begin our analysis of the error $E(\bfalpha)$ by assuming that the set $\mathcal{A}_N$ is $\mathfrak{a}$-uniform of degree $k$ according to the definition in \cite{gowers2} for $\mathfrak{a}$ suitably bounded in terms of $\delta_N$.  We then use the method of Weyl differencing to obtain an exponential sum which may be estimated using the method developed in \cite{gowers2}.

\begin{lemma}
If the set $\mathcal{A}_N$ is $\mathfrak{a}$-uniform of degree $k$, then
\begin{equation}
|E(\bfalpha)|\leq 2\mathfrak{a}^{2^{-k-1}}N
\label{equation10}
\end{equation}
uniformly in $\bfalpha$.
\label{lem9}
\end{lemma}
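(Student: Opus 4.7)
The approach is to interpret $E(\bfalpha)$ as an exponential sum against the balanced function $a(x)=\mathbf{1}_{\mathcal{A}_N}(x)-\delta_N$ of $\mathcal{A}_N$, iterate Weyl differencing $k$ times in order to reduce the degree-$k$ polynomial phase to a function that is constant in $x$, and then invoke Gowers' hypothesis in the form of a $U^{k+1}$-norm bound on $a$.

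I begin by extending $a$ by zero outside $[1,N]$ and writing $-E(\bfalpha)=\sum_x a(x)\,e(P(x))$, where $P(x)=\alpha_k x^k+\cdots+\alpha_1 x$. Applying Cauchy--Schwarz $k$ times in succession squares the absolute value at each step, introduces a new shift variable $h_j\in[-N,N]$, and lowers the degree in $x$ of the polynomial phase by one. Since $\deg P=k$, after $k$ differencings the remaining phase $\Delta_{h_1}\cdots\Delta_{h_k}P(x)=k!\,\alpha_k h_1\cdots h_k$ is independent of $x$ and so contributes only a unimodular factor that is discarded once absolute values are introduced inside. Standard bookkeeping of the Cauchy--Schwarz exponents yields
$$|E(\bfalpha)|^{2^k}\;\leq\;(2N)^{2^k-k-1}\sum_{h_1,\ldots,h_k}\Bigl|\sum_{x}\prod_{\omega\in\{0,1\}^k}a(x+\omega\cdot\mathbf{h})\Bigr|.$$

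Next I apply Cauchy--Schwarz once more, this time in the outer sum over $\mathbf{h}$, to remove the inner absolute value. Squaring both sides and expanding $|\,\cdot\,|^2$ by substituting $y=x+h_{k+1}$ produces a $(k+1)$-fold multilinear average that, in Gowers' notation, is precisely $\|a\|_{U^{k+1}}^{2^{k+1}}$. The hypothesis of $\mathfrak{a}$-uniformity of degree $k$ used in \cite{gowers2} is by definition the assertion that this quantity is at most $\mathfrak{a}\,N^{k+2}$. Chaining the two bounds gives
$$|E(\bfalpha)|^{2^{k+1}}\;\leq\;(2N)^{2^{k+1}-k-2}\,\mathfrak{a}\,N^{k+2}\;=\;2^{2^{k+1}-k-2}\,\mathfrak{a}\,N^{2^{k+1}},$$
so that on extracting $2^{k+1}$-th roots one obtains $|E(\bfalpha)|\leq 2^{1-(k+2)/2^{k+1}}\mathfrak{a}^{2^{-k-1}}N\leq 2\,\mathfrak{a}^{2^{-k-1}}N$, uniformly in $\bfalpha$.

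The only real obstacle is the careful bookkeeping: verifying by induction on $j$ that the power of $N$ appearing after $j$ Cauchy--Schwarz applications is exactly $2^j-j-1$, and then confirming that the resulting $(k+1)$-fold multilinear average really does coincide with Gowers' $U^{k+1}$ norm of $a$ once all shifts are unfolded. Neither the Weyl differencing itself nor the final reconciliation with the formulation of uniformity given in \cite{gowers2} introduces any genuine difficulty beyond this tracking of exponents.
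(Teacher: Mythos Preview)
Your proposal is correct and follows essentially the same route as the paper: both arguments write $E(\bfalpha)$ as an exponential sum against the balanced function of $\mathcal{A}_N$, perform iterated Weyl differencing until the degree-$k$ phase is annihilated, and then invoke the $\mathfrak{a}$-uniformity hypothesis as a bound on the resulting $(k+1)$-fold multilinear average. The only cosmetic difference is that the paper carries out all $k+1$ differencings uniformly (so that the phase vanishes identically at the last step), whereas you stop after $k$ steps, discard the $x$-independent unimodular phase by taking absolute values, and then perform the final Cauchy--Schwarz separately; the resulting exponent $(2N)^{2^{k+1}-k-2}$ and the final bound $|E(\bfalpha)|\leq 2\,\mathfrak{a}^{2^{-k-1}}N$ coincide exactly.
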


\begin{proof}
We begin by defining $A(x)$ to be the characteristic function of $\mathcal{A}_N$ and
\begin{displaymath}
\mathcal{E}_N(x)=\left\{\begin{array}{ll} \delta_N-A(x), & \textrm{when }1\leq x\leq N,\\ 0, & \textrm{otherwise,}\end{array}\right.
\end{displaymath}
the \textit{balanced function} of the set $\mathcal{A}_N$.  This allows us to write
\begin{displaymath}
E(\bfalpha)=\sum_{x=1}^N \mathcal{E}_N(x)e(\alpha_k x^k+\ldots+\alpha_1 x).
\end{displaymath}

To estimate this sum for general $\bfalpha$, we use the method of Weyl differencing (see, for example, $\S$2.2 in \cite{vaughan}).  It is convenient at this point to introduce the following shorthand notation for the associated difference operators.  For any function $f$, we define the first forward difference operators $\Delta_1$ by
\begin{displaymath}
\Delta_1(f(x); w)=f(x)\overline{f(x-w)}.
\end{displaymath}
For higher order forward difference operators, we define $\Delta_k$ recursively by
\begin{displaymath}
\Delta_k(f(x); w_1,\ldots,w_k)=\Delta_1\left(\Delta_{k-1}(f(x);w_1,\ldots,w_{k-1}),w_k\right).
\end{displaymath}
Under this notation, we recall from $\S$3 of \cite{gowers2} that a set $\mathcal{A}_N\subset [1,N]$ of cardinality $\delta_N N$ is said to be \textit{$\mathfrak{a}$-uniform of degree $k$} for a parameter $\mathfrak{a}$ if
\begin{equation}
\sum_{\left|w_1\right|,\ldots,\left|w_{k+1}\right|\leq N-1}\sum_{x\in I_{\bfw}}\Delta_{k+1}\left(\mathcal{E}_N(x);\bfw\right)\leq\mathfrak{a}N^{k+2},
\label{equation7}
\end{equation}
where the interval $I_{\bfw}$, which may be empty, is defined by
\begin{equation}
\begin{array}{lll}
I_{\bfw}&=&[1,N]\cap\left([1,N]+w_1\right)\cap\ldots\cap\left([1,N]+w_1+\ldots+w_{k+1}\right).
\end{array}
\label{equation18}
\end{equation}
Here $[1,N]+w$ denotes the right translation of the interval $[1,N]$ by $w$.  The statement of this definition differs slightly from that in \cite{gowers2}, in which the set $\mathcal{A}_N$ is taken to be a subset of $\mathbb{Z}/N\mathbb{Z}$ rather than $\mathbb{Z}$, and so the sum is taken instead over $x,w_1,\ldots,w_{k+1}\in\mathbb{Z}/N\mathbb{Z}$.  However, an elementary computation reveals that the definition above is equivalent to that in \cite{gowers2}, as the definition of $I_{\bfw}$ ensures that the quantities
\begin{displaymath}
x-\epsilon_1 w_1-\ldots-\epsilon_{k+1} w_{k+1}\qquad\left(\epsilon_1,\ldots,\epsilon_{k+1}\right)\in\{0,1\}^{k+1}
\end{displaymath}
all lie in the interval $[1,N]$ for a given $(k+1)$-tuple $\bfw$.

If we now apply $k+1$ iterations of Weyl differencing to the sum $E(\bfalpha)$, we obtain
\begin{displaymath}
\left|E(\bfalpha)\right|^{2^{k+1}} \leq (2N)^{2^{k+1}-k-2} \sum_{\left|w_1\right|,\ldots,\left|w_{k+1}\right|\leq N-1}\sum_{x\in I_{\bfw}}\Delta_{k+1}\left(\mathcal{E}_N(x);\bfw\right),
\end{displaymath}
where we retain the definition of $I_{\bfw}$ from (\ref{equation18}).  We note that after $k+1$ iterations of Weyl differencing, the exponential factor $e(\alpha_k x^k+\ldots+\alpha_1 x)$ is reduced to $1$.  Now, if the set $\mathcal{A}_N$ is $\mathfrak{a}$-uniform of degree $k$, then it follows by (\ref{equation7}) that
\begin{displaymath}
\left|E(\bfalpha)\right|^{2^{k+1}}\leq (2N)^{2^{k+1}-k-2}\cdot\mathfrak{a}N^{k+2}=2^{2^{k+1}-k-2}\mathfrak{a} N^{2^{k+1}}.
\end{displaymath}
By raising both sides to the power $2^{-k-1}$, we obtain
\begin{displaymath}
\left|E(\bfalpha)\right|<2\mathfrak{a}^{2^{-k-1}} N,
\end{displaymath}
which is precisely the bound stated in (\ref{equation10}).
\end{proof}

\vspace{4mm}

\section{Hardy-Littlewood method preliminaries}
To estimate both the moment integral $J$ and the approximation integral $I$ in (\ref{equation11}), we apply the classical Hardy-Littlewood circle method by subdividing the $k$-dimensional unit cube $\mathbb{T}^k$ into the appropriately defined major arcs $\mathfrak{M}$ and the corresponding minor arcs $\mathfrak{m}$ and then estimating the contribution of the integrals over each set.  To obtain these estimates, we require several definitions and technical lemmas.

We first define the dissection of $\mathbb{T}^k$ into the major and minor arcs.  Let
\begin{equation}
\sigma(k)^{-1}=8k^2\left(\log k+(\log\log k)/2+2\right),\qquad\delta(k)=k\sigma(k).
\label{equation36}
\end{equation}
For $q\in\mathbb{N}$, $\bfa\in\mathbb{Z}^k$, we may define an individual major arc $\mathfrak{M}(q,\bfa)$ by
\begin{displaymath}
\mathfrak{M}(q,\bfa)=\left\{\bfalpha\in[0,1)^k:\left|q\alpha_j-a_j\right|\leq N^{\delta(k)-j}, 1\leq j\leq k \right\}.
\end{displaymath}
This allows us to define the major arcs $\mathfrak{M}$ and the minor arcs $\mathfrak{m}$ to be
\begin{equation}
\mathfrak{M}=\bigcup_{\substack{0\leq a_k,\ldots,a_1\leq q\leq N^{\delta(k)}\\(q,a_k,\ldots,a_1)=1}} \mathfrak{M}(q,\bfa),\qquad \mathfrak{m}=[0,1)^k\backslash\mathfrak{M}.
\label{equation15}
\end{equation}
It follows that the integral of any function over $\mathbb{T}^k$ is the sum of the integrals over $\mathfrak{M}$ and $\mathfrak{m}$.

Our estimates of the minor arc contributions require the following lemma.

\begin{lemma}
Let $\sigma(k)$ and $\delta(k)$ be as defined in (\ref{equation36}).  Suppose that $N$ is sufficiently large in terms of $k$ and that $|g(\bfalpha)|\geq N^{1-\sigma(k)}$.  Then there exist $q\in\mathbb{N}$ and $\bfa\in\mathbb{Z}^k$ such that $1\leq q\leq N^{\delta(k)}$ and $|q\alpha_j-a_j|\leq N^{\delta(k)-j}$ for $1\leq j\leq k$.
\label{lem4}
\end{lemma}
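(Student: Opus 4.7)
The plan is to establish the contrapositive: if no pair $(q,\bfa)$ satisfying $q\leq N^{\delta(k)}$ and $|q\alpha_j-a_j|\leq N^{\delta(k)-j}$ for $1\leq j\leq k$ exists, then $|g(\bfalpha)|<N^{1-\sigma(k)}$. This is the standard Weyl-Vinogradov minor-arc statement, and the proof combines classical Weyl differencing with a descent argument on the coefficients $\alpha_k,\alpha_{k-1},\ldots,\alpha_1$.

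I would first apply $k$ rounds of squaring-and-differencing to $g(\bfalpha)$. Since each differencing lowers by one the degree of the polynomial appearing in the exponential, after $k$ iterations the sum collapses to a linear one in the original summation variable, and one obtains an estimate roughly of the form
\begin{displaymath}
|g(\bfalpha)|^{2^k}\ll N^{2^k-k-1}\sum_{|h_1|,\ldots,|h_k|\leq N}\min\bigl(N,\|k!\,\alpha_k h_1\cdots h_k\|^{-1}\bigr),
\end{displaymath}
where $\|\cdot\|$ denotes distance to the nearest integer. A familiar counting argument (via Dirichlet's approximation theorem applied to $k!\alpha_k$) for the right-hand sum then yields a dichotomy: either $\alpha_k$ admits a rational approximation $a_k/q_k$ with $q_k\leq N^{\delta(k)}$ and $|q_k\alpha_k-a_k|\leq N^{\delta(k)-k}$, or the right-hand side is so small that $|g(\bfalpha)|<N^{1-\sigma(k)}$ and we are finished.

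Supposing the former, I would write $\alpha_k=a_k/q_k+\beta_k$, partition the range $[1,N]$ into residue classes modulo $q_k$, and substitute. The leading-degree exponential factor $e(\alpha_k x^k)$ then separates into a complete exponential sum in residues and a smoothly varying factor in $\beta_k$, and $g(\bfalpha)$ reduces to a collection of shorter Weyl sums whose effective leading coefficient is $\alpha_{k-1}$. Applying the same Weyl-differencing dichotomy to these sums yields a rational approximation to $\alpha_{k-1}$, and iterating down through $\alpha_{k-2},\ldots,\alpha_1$ produces approximations $a_j/q_j$ for each coefficient. Taking $q=\mathrm{lcm}(q_k,\ldots,q_1)$ delivers the required simultaneous approximation.

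The principal obstacle is the bookkeeping needed to ensure that the iteratively combined denominator $q$ and the accumulated approximation errors remain within the prescribed ranges $q\leq N^{\delta(k)}$ and $|q\alpha_j-a_j|\leq N^{\delta(k)-j}$ at every stage of the descent. The explicit choices $\sigma(k)^{-1}=8k^2(\log k+(\log\log k)/2+2)$ and $\delta(k)=k\sigma(k)$ in (\ref{equation36}) are precisely calibrated so that the savings produced by each Weyl-differencing step dominate the multiplicative losses from combining denominators via least common multiples. Verifying that this balance holds uniformly over the $k$ stages of the descent is the technical heart of the argument.
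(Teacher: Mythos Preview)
The paper does not actually prove this lemma; it simply cites Theorem~4.4 of Baker's \textit{Diophantine inequalities} as a black box. Your proposal, by contrast, sketches a direct argument via classical Weyl differencing followed by an inductive descent on the coefficients. Unfortunately, this approach cannot yield the lemma with the specific exponents $\sigma(k)$ and $\delta(k)$ defined in (\ref{equation36}).

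The difficulty is quantitative. After $k$ rounds of Weyl differencing, the dichotomy you describe for $\alpha_k$ comes from Weyl's inequality, which on the complement of the set $\{q\leq N^{\delta(k)},\ |q\alpha_k-a_k|\leq N^{\delta(k)-k}\}$ gives at best
\[
|g(\bfalpha)|\ll N^{1+\epsilon-\delta(k)\cdot 2^{1-k}}.
\]
For this to imply $|g(\bfalpha)|<N^{1-\sigma(k)}$ you would need $\delta(k)\cdot 2^{1-k}>\sigma(k)$, i.e.\ (since $\delta(k)=k\sigma(k)$) you would need $k\cdot 2^{1-k}>1$. This fails for every $k\geq 3$. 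In other words, Weyl differencing produces savings that decay like $2^{-k}$, whereas the lemma demands savings of size $\sigma(k)\sim (8k^2\log k)^{-1}$, which for large $k$ is far larger. Your final paragraph asserts that the constants in (\ref{equation36}) are ``precisely calibrated'' for the Weyl scheme, but in fact they are calibrated for Vinogradov's method: the shape $\sigma(k)^{-1}\asymp k^2\log k$ is the signature of bounds coming from Vinogradov's mean value theorem, not from iterated squaring.

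Baker's proof of the cited theorem proceeds instead by combining Vinogradov's mean value theorem with a large-values argument: one shows that if $|g(\bfalpha)|$ is large, then a suitable mean value forces the existence of the simultaneous rational approximation directly, without an inductive descent through the individual $\alpha_j$. If you wish to supply a self-contained proof rather than a citation, that is the machinery you need to invoke; the Weyl route will not close.
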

\begin{proof}
This is a special case of Theorem 4.4 in \cite{baker}.
\end{proof}

We note that if we define $Q=(q,a_k,\ldots,a_1)$ and $q^*=q/Q$, $a_j^*=a_j/Q$ for $1\leq j\leq k$, then it follows by Lemma \ref{lem4} that $|g(\bfalpha)|\geq N^{1-\sigma(k)}$ necessarily forces $\bfalpha\in\mathfrak{M}$.  The contrapositive of this result implies that $\sup_{\bfalpha\in\mathfrak{m}}|g(\bfalpha)|\ll N^{1-\sigma(k)}$.

To estimate the contribution of integrals over the major arcs $\mathfrak{M}$, we require the definitions
\begin{displaymath}
\begin{array}{rclrcl}
S(q,\bfa)&=&\displaystyle{\sum_{m=1}^q e_q\left(a_k m^k+\ldots+a_1 m\right)}, & S_i(q,\bfa)&=&\displaystyle{S(q,\lambda_i\bfa)},\vspace{1mm}\\
w(\bfbeta)&=&\displaystyle{\int_0^N e\left(\beta_k\gamma^k+\ldots+\beta_1\gamma\right)\, d\gamma,} & w_i(\bfbeta)&=&\displaystyle{w(\lambda_i\bfbeta).}
\end{array}
\end{displaymath}
We also require the following lemma, which we state in the generality necessary for use in estimating both the moment integral $J$ and the approximation integral $I$.

\begin{lemma}
Suppose that $(q,a_k,\ldots,a_1)=1$.  Then the series $S_i(q,\bfa)$ and the integral $w_i(\bfbeta)$ satisfy
\begin{equation}
S_i(q,\bfa)\ll q^{1-1/k+\epsilon}
\label{equation5}
\end{equation}
and
\begin{equation}
w_i(\bfbeta)\ll N(1+|\beta_1| N+\ldots+|\beta_k| N^k)^{-1/k}.
\label{equation8}
\end{equation}
Moreover, if $\bfalpha\in\mathfrak{M}$ and we define $\beta_j=\alpha_j-a_j/q$ for $1\leq j\leq k$, then
\begin{equation}
g_i(\bfalpha)-q^{-1} S_i(q,\bfa) w_i(\bfbeta)\ll q(1+|\beta_1|N+\ldots+|\beta_k|N^k).
\label{equation9}
\end{equation}
The implied constants in each case depend on $\lambda_i$.
\label{lem3}
\end{lemma}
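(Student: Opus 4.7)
The plan is to prove the three bounds in sequence, treating each as a standard Hardy--Littlewood estimate requiring only minor bookkeeping to absorb the extra factor $\lambda_i$ into the implied constants.

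For the complete sum bound (\ref{equation5}), I would first address a technical nuisance: the hypothesis $(q,a_k,\ldots,a_1)=1$ does not force $(q,\lambda_i a_k,\ldots,\lambda_i a_1)=1$, but any common divisor of the latter must divide $\lambda_i$, so is bounded in terms of $\lambda_i$ alone. Extracting this divisor and rescaling, one is left with a primitive complete polynomial exponential sum of degree $k$ to modulus $q' \asymp q$. To this the classical Hua--Weyl estimate for complete exponential sums (Theorem~7.1 of Vaughan's monograph, for example) applies and gives $S_i(q,\bfa) \ll (q')^{1-1/k+\epsilon} \ll_{\lambda_i} q^{1-1/k+\epsilon}$.

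For the oscillatory integral bound (\ref{equation8}), I would exploit oscillation in the phase $P(\gamma) = \lambda_i(\beta_k\gamma^k+\cdots+\beta_1\gamma)$ on $[0,N]$. The basic strategy is the standard van der Corput / repeated integration by parts argument: choose the index $j$ for which $|\beta_j|N^j$ is the largest among $|\beta_1|N,\ldots,|\beta_k|N^k$; then the $j$-th derivative of $P$ has size essentially $|\beta_j|$, and the classical inequality $\bigl|\int e(P(\gamma))\,d\gamma\bigr| \ll |\beta_j|^{-1/j}$ (see Theorem~7.3 of Vaughan) converts this into $w_i(\bfbeta) \ll N\bigl(|\beta_j|N^j\bigr)^{-1/k}$, which is the required bound up to a trivial estimate $w_i(\bfbeta) \ll N$ used when all $|\beta_j|N^j$ are small.

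For the major arc approximation (\ref{equation9}), I would partition the sum defining $g_i(\bfalpha)$ into arithmetic progressions modulo $q$ by writing $x = qm+r$ with $1 \leq r \leq q$. The phase factorises as $e_q(\lambda_i a_k r^k + \cdots + \lambda_i a_1 r) \cdot e(P(qm+r))$, with $P$ as above, so
\begin{displaymath}
g_i(\bfalpha) = \sum_{r=1}^q e_q(\lambda_i a_k r^k + \cdots + \lambda_i a_1 r) \sum_{m} e(P(qm+r)).
\end{displaymath}
Replacing each inner sum by $q^{-1}w_i(\bfbeta)$ via comparison with the integral, the error per residue class is controlled by $q \cdot \sup_{[0,N]}|P'|$ (mean value theorem applied to $e(P)$ across a step of length $q$) combined with the number of steps; summed over $r$ and using $\sup|P'| \ll |\beta_1| + |\beta_2|N + \cdots + k|\beta_k|N^{k-1}$, this yields the stated error $\ll q(1+|\beta_1|N+\cdots+|\beta_k|N^k)$. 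The main potential obstacle is bookkeeping: one must verify that the error from trimming the boundary residue classes and from the integral-to-sum approximation really combines to the single clean expression on the right, but no new ideas are needed beyond those already in Theorem~7.2 of Vaughan, which I would cite for the scalar case $\lambda_i=1$ and extend by the scaling argument used in (\ref{equation5}).
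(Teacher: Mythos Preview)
Your proposal is correct and takes essentially the same approach as the paper: the paper's own proof consists entirely of citing Theorems~7.1, 7.3, and 7.2 of Vaughan's monograph for the three bounds respectively, exactly as you do. Your additional bookkeeping on extracting the common factor $(q,\lambda_i)$ to reduce to the primitive case is a useful elaboration that the paper leaves implicit, but the underlying argument is identical.
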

\begin{proof}
These results all follow from the discussion in Chapter 7 of \cite{vaughan}.  In particular, the bounds in (\ref{equation5}) and (\ref{equation8}) follow respectively from Theorem 7.1 and Theorem 7.3, while the error estimate in (\ref{equation9}) follows from Theorem 7.2.
\end{proof}

To estimate the main term in the major arc contribution for the approximation integral $I$, we require the following lemma on the sums $S_i(q,\bfa)$.

\begin{lemma}
Suppose $(q,a_k,\ldots,a_1)=(r,b_k,\ldots,b_1)=(q,r)=1$.  Then
\begin{displaymath}
S_i(qr,a_k r+b_k q,\ldots,a_1 r+b_1 q)=S_i(q,a_k,\ldots,a_1)S_i(r,b_k,\ldots,b_1).
\end{displaymath}
Furthermore, the function $S(q)$ defined by
\begin{equation}
S(q)=\sum_{\substack{0\leq a_k,\ldots,a_1\leq q\\(q,a_k,\ldots,a_1)=1}}q^{-s}\prod_{i=1}^s S_i(q,\bfa)
\label{equation13}
\end{equation}
is multiplicative.
\label{lem12}
\end{lemma}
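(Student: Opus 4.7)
My plan is to prove the first identity by a change of variables based on the Chinese Remainder Theorem, and then deduce multiplicativity of $S(q)$ as a direct corollary. Since $(q,r)=1$, as $m_1$ ranges over $\{1,\ldots,q\}$ and $m_2$ over $\{1,\ldots,r\}$ the value $m=rm_1+qm_2$ traverses a complete system of residues modulo $qr$. I would substitute this into the definition of $S_i(qr,a_k r+b_k q,\ldots,a_1 r+b_1 q)$ and expand $(rm_1+qm_2)^j$ by the binomial theorem. Every mixed term contains a factor of $qr$, so
\begin{displaymath}
(rm_1+qm_2)^j\equiv r^j m_1^j+q^j m_2^j\pmod{qr}.
\end{displaymath}

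Multiplying by the coefficient $\lambda_i(a_j r+b_j q)$ and reducing modulo $qr$ once more annihilates the cross terms $\lambda_i a_j r q^j m_2^j$ and $\lambda_i b_j q r^j m_1^j$, leaving
\begin{displaymath}
\lambda_i(a_j r+b_j q)(rm_1+qm_2)^j\equiv\lambda_i a_j r^{j+1}m_1^j+\lambda_i b_j q^{j+1}m_2^j\pmod{qr}.
\end{displaymath}
Consequently the exponential factor splits as $e_q(\lambda_i a_j r^j m_1^j)\,e_r(\lambda_i b_j q^j m_2^j)$ for each $j$, and the double sum over $(m_1,m_2)$ decouples. Since $(q,r)=1$, the maps $m_1\mapsto rm_1\pmod q$ and $m_2\mapsto qm_2\pmod r$ are bijections on their respective residue systems, so after the substitutions $n_1=rm_1$ and $n_2=qm_2$ the two factors become exactly $S_i(q,\bfa)$ and $S_i(r,(b_k,\ldots,b_1))$, establishing the first identity.

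For multiplicativity of $S(q)$, I would parametrize in the reverse direction: by CRT each $k$-tuple $(c_k,\ldots,c_1)$ modulo $qr$ corresponds bijectively to a pair in $[0,q-1]^k\times[0,r-1]^k$ with $c_j\equiv a_j r+b_j q\pmod{qr}$. A brief gcd computation using $(q,r)=1$ shows that $(qr,c_k,\ldots,c_1)=1$ if and only if $(q,a_k,\ldots,a_1)=(r,b_k,\ldots,b_1)=1$, so the coprimality condition separates cleanly into two independent conditions. Applying the first part of the lemma coordinate-by-coordinate in $i$, interchanging the product over $i$ with the decoupled sums over $(\bfa,(b_k,\ldots,b_1))$, and using $(qr)^{-s}=q^{-s}r^{-s}$ yields $S(qr)=S(q)S(r)$.

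The only real obstacle is bookkeeping. The delicate step is checking that the congruence $(rm_1+qm_2)^j\equiv r^j m_1^j+q^j m_2^j\pmod{qr}$ combines with the particular coefficient shape $a_j r+b_j q$ in such a way that every surviving summand involves only one of the moduli; the transport of the coprimality condition through the CRT parametrization requires analogous care but is routine.
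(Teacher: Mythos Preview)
Your argument is correct and is exactly the standard Chinese Remainder Theorem computation underlying Lemmas~2.10 and~2.11 of Vaughan, which is all the paper invokes for its proof. The paper simply cites those lemmas as a black box, whereas you have written out the details, so the approaches coincide.
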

\begin{proof}
The first result is essentially a generalisation of Lemma 2.10 in \cite{vaughan}, while the second result follows from the first and from Lemma 2.11 in \cite{vaughan}.
\end{proof}

The later steps of our analysis of the major arc contribution to our estimate for the approximation integral $I$ rest on establishing the existence of non-singular real solutions to the system (\ref{equation1}) in $(0,1)^s$ and non-singular $p$-adic solutions for all rational primes $p$.

\begin{lemma}
The system (\ref{equation1}) possesses a real, non-singular solution in $(0,1)^s$.
\label{lem7}
\end{lemma}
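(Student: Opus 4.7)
My plan is to exploit the translation and dilation invariance of the system (\ref{equation1}) to transport any non-singular real solution into the open cube $(0,1)^s$. Under the hypotheses of Theorems \ref{thm1} and \ref{thm2}, we are granted the existence of some non-singular real solution $\bfa=(a_1,\ldots,a_s)\in\mathbb{R}^s$, but without any a priori control on its location; the task is to move it into $(0,1)^s$ without destroying the non-singularity.

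First I would record the relevant affine invariance concretely: for any $t>0$ and $c\in\mathbb{R}$, the vector $\bfa'=t\bfa+c(1,\ldots,1)$ is again a real solution. Dilation gives $L_j(t\bfa)=t^jL_j(\bfa)=0$, and translation invariance follows by expanding $L_j(\bfa+c(1,\ldots,1))$ via the binomial theorem; all cross-terms are constant multiples of $L_l(\bfa)$ for $1\leq l\leq j$, which vanish by assumption, while the remaining pure-constant term is proportional to $\sum_i\lambda_i=0$. Composing the two invariances, $\bfa'$ satisfies (\ref{equation1}).

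Next I would pick $t$ and $c$ to force $\bfa'$ into the open cube. Setting $m=\min_i a_i$ and $M=\max_i a_i$, take any $t\in(0,(M-m+1)^{-1})$ and $c=1/2-t(m+M)/2$. Then the closed interval $[tm+c,\,tM+c]$ is centered at $1/2$ and of length $t(M-m)<1$, so it is contained in $(0,1)$, and in particular every coordinate of $\bfa'$ lies strictly between $0$ and $1$.

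Finally I would check that non-singularity is preserved. Using the explicit formula
\begin{displaymath}
\Delta(i_1,\ldots,i_k)=k!\,\lambda_{i_1}\cdots\lambda_{i_k}\prod_{1\leq \mu<\nu\leq k}(x_{i_\mu}-x_{i_\nu})
\end{displaymath}
recorded just before Theorem \ref{thm1}, evaluation at $\bfa'$ multiplies the original Jacobian by $t^{k(k-1)/2}$, which is non-zero for $t>0$. Equivalently, since the affine map $a\mapsto ta+c$ is injective on $\mathbb{R}$ whenever $t\neq 0$, the number of distinct coordinate values of $\bfa'$ equals that of $\bfa$, which is at least $k$. The entire argument is a direct consequence of the translation and dilation invariance already established in the introduction, so I anticipate no genuine obstacle.
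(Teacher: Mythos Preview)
Your proposal is correct and follows essentially the same approach as the paper: both arguments invoke the hypothesised non-singular real solution and use the translation and dilation invariance of the system to rescale and recentre it into $(0,1)^s$, then observe that the affine map preserves the distinctness of coordinates (equivalently, scales the Jacobian by a non-zero factor). The paper simply makes the concrete choice $\eta_i=y_i/(4Y)+1/2$ with $Y=\max_i|y_i|$, whereas you parametrise slightly more generally via $m,M$ and an arbitrary admissible $t$; your verification of the translation invariance and of the Jacobian scaling is more explicit than the paper's, but the substance is the same.
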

\begin{proof}
That the system (\ref{equation1}) possesses a real, non-singular solution $\bfy=(y_1,\ldots,y_s)$ is a hypothesis of Theorems \ref{thm1} and \ref{thm2}.  If $\bfy\notin(0,1)^s$, then we may generate a real, non-singular solution in $(0,1)^s$ as follows.  Define
\begin{displaymath}
Y=\max_{1\leq i\leq s} \left|y_i\right|.
\end{displaymath}
Then the vector $(\eta_1,\ldots,\eta_s)$, where
\begin{displaymath}
\eta_i=\frac{y_i}{4Y}+\frac{1}{2}\qquad(1\leq i\leq s),
\end{displaymath}
lies in the $s$-dimensional cube $(0,1)^s$ and is also a non-singular solution to the system (\ref{equation1}) by translation and dilation invariance.
\end{proof}

Before considering the existence of $p$-adic solutions, we let $q$ be any natural number and define $M_n(q)$ to be the number of solutions to the system of congruences
\begin{equation}
\lambda_1 x_1^j+\ldots+\lambda_s x_s^j \equiv 0 \pmod q\qquad (1\leq j\leq k)
\label{equation27}
\end{equation}
with $\bfx\in(\mathbb{Z}/q\mathbb{Z})^s$.  We also require the following version of Hensel's Lemma, which is essentially Proposition 5.20 in \cite{greenberg}.

\begin{lemma}(Hensel's Lemma)
Suppose that $L_1(X_1,\ldots,X_k),\ldots, L_k(X_1,\ldots,X_k)\in\mathbb{Z}_p[X_1,\ldots,X_k]$ and $(x_1,\ldots,x_k)\in\mathbb{Z}_p^k$.  Define
\begin{displaymath}
\Delta_0=\left.\det\left(\frac{\partial L_j}{\partial X_i}\right)\right|_{(x_1,\ldots,x_k)},
\end{displaymath}
where the partial derivatives are formal derivatives.  Suppose that $\Delta_0\neq 0$ and that
\begin{displaymath}
\max_{1\leq j\leq k}\left\{\left|L_j(x_1,\ldots,x_k)\right|_p\right\}<\left|\Delta_0\right|_p^2.
\end{displaymath}
Then there exists a unique vector $\bfy=(y_1,\ldots,y_k)\in\mathbb{Z}_p^k$ such that $L_1(\bfy)=\ldots=L_k(\bfy)=0$ and
\begin{displaymath}
\max_{1\leq j\leq k}\left\{\left|x_j-y_j\right|_p\right\}\leq p^{-1}\cdot\left|\Delta_0\right|_p.
\end{displaymath}
\label{lem1}
\end{lemma}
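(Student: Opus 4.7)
The plan is to construct $\bfy$ by Newton iteration starting at $\bfx$, exploiting the non-archimedean setting to obtain quadratic convergence in the $p$-adic norm. First I would record the exact Taylor expansion for a polynomial $L_j\in\mathbb{Z}_p[X_1,\ldots,X_k]$ about a point $\bfx\in\mathbb{Z}_p^k$: for every $\mathbf{h}\in\mathbb{Z}_p^k$,
$$L_j(\bfx+\mathbf{h})=L_j(\bfx)+\sum_{i=1}^{k}\frac{\partial L_j}{\partial X_i}(\bfx)\,h_i+R_j(\bfx,\mathbf{h}),$$
where $R_j(\bfx,\mathbf{h})$ is a polynomial with coefficients in $\mathbb{Z}_p$ whose monomials in $\mathbf{h}$ all have total degree at least two, so that $|R_j(\bfx,\mathbf{h})|_p\leq(\max_i|h_i|_p)^2$ by the ultrametric inequality. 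Applying the same expansion to the polynomial $\Delta(\bfx)=\det(\partial L_j/\partial X_i)(\bfx)$ yields $|\Delta(\bfx+\mathbf{h})-\Delta(\bfx)|_p\leq\max_i|h_i|_p$.

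Next I would set up the iteration $\bfx^{(0)}=\bfx$ and, given $\bfx^{(n)}$ with $\Delta(\bfx^{(n)})\neq 0$, solve the linear system
$$\sum_{i=1}^{k}\frac{\partial L_j}{\partial X_i}\bigl(\bfx^{(n)}\bigr)\,h_i^{(n)}=-L_j\bigl(\bfx^{(n)}\bigr),\qquad 1\leq j\leq k,$$
and put $\bfx^{(n+1)}=\bfx^{(n)}+\mathbf{h}^{(n)}$. Writing $\rho_n=\max_j|L_j(\bfx^{(n)})|_p$, Cramer's rule expresses each $h_i^{(n)}$ as the ratio of an element of $\mathbb{Z}_p$ of $p$-adic norm at most $\rho_n$ by $\Delta(\bfx^{(n)})$, so that $\max_i|h_i^{(n)}|_p\leq\rho_n/|\Delta(\bfx^{(n)})|_p$. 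Substituting back into the Taylor formula gives $L_j(\bfx^{(n+1)})=R_j(\bfx^{(n)},\mathbf{h}^{(n)})$, whence $\rho_{n+1}\leq\rho_n^2/|\Delta(\bfx^{(n)})|_p^2$.

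Then I would propagate these estimates by induction on $n$. The hypothesis $\rho_0<|\Delta_0|_p^2$ forces $\max_i|h_i^{(0)}|_p<|\Delta_0|_p$, so the expansion for $\Delta$ gives $|\Delta(\bfx^{(1)})-\Delta_0|_p<|\Delta_0|_p$ and therefore $|\Delta(\bfx^{(1)})|_p=|\Delta_0|_p$ by the ultrametric property. Iterating this argument yields $|\Delta(\bfx^{(n)})|_p=|\Delta_0|_p$ and $\rho_n/|\Delta_0|_p^2\leq(\rho_0/|\Delta_0|_p^2)^{2^n}$ for every $n\geq 0$, so $\rho_n\to 0$ geometrically. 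Since $\max_i|h_i^{(n)}|_p\leq\rho_n/|\Delta_0|_p$ also tends to $0$, the sequence $\bfx^{(n)}$ is Cauchy in the complete space $\mathbb{Z}_p^k$ and converges to a limit $\bfy$ satisfying $L_j(\bfy)=0$ by continuity. The ultrametric inequality bounds $|y_i-x_i|_p$ by $\rho_0/|\Delta_0|_p$, and because $p$-adic norms are integer powers of $p$ the strict inequality $\rho_0<|\Delta_0|_p^2$ sharpens to $\rho_0\leq p^{-1}|\Delta_0|_p^2$, delivering the required estimate $|y_i-x_i|_p\leq p^{-1}|\Delta_0|_p$.

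For uniqueness, suppose $\bfy'\in\mathbb{Z}_p^k$ also satisfies $L_j(\bfy')=0$ for all $j$ and $|y_i'-x_i|_p\leq p^{-1}|\Delta_0|_p$ for all $i$. Setting $\mathbf{h}=\bfy'-\bfy$ and expanding $0=L_j(\bfy+\mathbf{h})$ at $\bfy$ yields $\sum_i(\partial L_j/\partial X_i)(\bfy)\,h_i=-R_j(\bfy,\mathbf{h})$, and since $|\Delta(\bfy)|_p=|\Delta_0|_p$ the same Cramer-rule argument produces $\max_i|h_i|_p\leq(\max_i|h_i|_p)^2/|\Delta_0|_p$; combined with the known bound $\max_i|h_i|_p<|\Delta_0|_p$, this forces $\mathbf{h}=\mathbf{0}$. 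The main technical obstacle is the $p$-adic bookkeeping that guarantees $|\Delta(\bfx^{(n)})|_p$ never drops below $|\Delta_0|_p$ throughout the iteration, since this is precisely what upgrades the convergence from linear to quadratic and keeps the final distance estimate sharp enough for the conclusion.
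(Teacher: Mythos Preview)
Your argument is correct: the Newton iteration with the Cramer-rule bound on the increment, the ultrametric stability of $|\Delta(\bfx^{(n)})|_p$, and the uniqueness computation all go through as written. The one point worth making explicit is that the bound $|y_i-x_i|_p\leq\max_n|h_i^{(n)}|_p$ uses that the sequence $\rho_n/|\Delta_0|_p$ is non-increasing, which you have but do not state; otherwise the write-up is complete.

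Compared with the paper: the paper does not supply its own proof of this lemma at all. It is stated with the remark that it ``is essentially Proposition 5.20 in \cite{greenberg}'' and then used as a black box in the proof of the subsequent lemma. Your Newton-iteration argument is precisely the classical proof one finds in Greenberg, so while you have written out what the paper merely cites, the underlying route is the same rather than a genuinely different approach.
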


\begin{lemma}
For every rational prime $p$ there exists a number $u=u(p)<\infty$ such that
\begin{displaymath}
M_n(p^t)\geq p^{(t-u)(s-k)}
\end{displaymath}
for all $t\geq u$.
\label{lem11}
\end{lemma}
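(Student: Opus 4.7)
The plan is to use the hypothesised non-singular $p$-adic solution together with Hensel's Lemma (Lemma~\ref{lem1}) to produce $p^{(t-u)(s-k)}$ distinct solutions modulo $p^t$, parameterised by free choices of the $s-k$ ``non-pivot'' coordinates.

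By hypothesis there is a non-singular $p$-adic solution $\bfa=(a_1,\ldots,a_s)\in\mathbb{Z}_p^s$, with some index set $T=\{i_1,\ldots,i_k\}$ for which the Jacobian $\Delta_0=\Delta(i_1,\ldots,i_k)\neq 0$. I would set $v=v_p(\Delta_0)<\infty$ and take $u=2v+1$. For each $t\geq u$ and each choice of $c_i\in\{0,1,\ldots,p^{t-u}-1\}$ for $i\notin T$, define $x_i=a_i+p^u c_i\in\mathbb{Z}_p$, and view the resulting forms as polynomials $\widetilde{L}_j(X_{i_1},\ldots,X_{i_k})$ in the $k$ pivot variables alone, obtained from (\ref{equation1}) by freezing the $x_i$ with $i\notin T$. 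The aim is then to apply Hensel's Lemma to this reduced system at the approximate root $(a_{i_1},\ldots,a_{i_k})$.

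The key verification is the Hensel hypothesis. Since $L_j(\bfa)=0$ and $x_i\equiv a_i\pmod{p^u}$ for $i\notin T$, one has
\[
\widetilde{L}_j(a_{i_1},\ldots,a_{i_k})=\sum_{i\notin T}\lambda_i(x_i^j-a_i^j)\in p^u\mathbb{Z}_p,
\]
so $|\widetilde{L}_j(a_{i_1},\ldots,a_{i_k})|_p\leq p^{-u}=p^{-2v-1}<p^{-2v}=|\Delta_0|_p^2$, noting that the Jacobian of $\widetilde{L}$ at the approximate root coincides with $\Delta_0$. Lemma~\ref{lem1} then yields a unique $(y_{i_1},\ldots,y_{i_k})\in\mathbb{Z}_p^k$ extending the tuple to a $p$-adic solution of (\ref{equation1}); reducing modulo $p^t$ produces a solution in $(\mathbb{Z}/p^t\mathbb{Z})^s$.

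Distinct choices of $(c_i)_{i\notin T}$ plainly produce distinct solutions modulo $p^t$, since they already disagree in some non-pivot coordinate by a non-zero multiple of $p^u$ strictly less than $p^t$. The count of parameter choices is $(p^{t-u})^{s-k}=p^{(t-u)(s-k)}$, which is the claimed bound. The main subtle point is the calibration $u=2v+1$ relative to the Jacobian valuation, which is exactly what makes the Hensel hypothesis hold uniformly over every parameter choice; everything else reduces to straightforward bookkeeping.
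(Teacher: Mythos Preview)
Your proof is correct and follows essentially the same route as the paper: fix the non-singular $p$-adic solution, set $u=2v_p(\Delta_0)+1$, perturb the $s-k$ non-pivot coordinates by arbitrary multiples of $p^u$ below $p^t$, and apply Hensel's Lemma to solve for the pivot coordinates. Your write-up is in fact slightly tidier than the paper's (which contains a minor typo in the congruence modulus) and makes the distinctness of the resulting solutions modulo $p^t$ explicit.
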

\begin{proof}
Suppose that $(a_1,\ldots,a_s)$ is a non-singular solution in $\mathbb{Z}_p^s$ to the system of congruences (\ref{equation27}).  That such a solution exists is a hypothesis of Theorems \ref{thm1} and \ref{thm2}.  By suitable re-numbering, we may assume that $\Delta(1,\ldots,k)\neq 0$, where $\Delta(1,\ldots,k)$ is the Jacobian determinant defined in (\ref{equation40}).

For an $(s-k)$-dimensional vector $\bfz=(z_{k+1},\ldots,z_s)$, we define
\begin{displaymath}
\Lambda_j(\bfz)=\lambda_{k+1} z_{k+1}^j+\ldots+\lambda_s z_s^j\qquad(1\leq j\leq k).
\end{displaymath}
Suppose $\left|\Delta(1,\ldots,k)\right|_p^2=p^{1-u}>0$.  We choose $z_{k+1},\ldots,z_s$ so that
\begin{equation}
z_i\equiv a_i\pmod {p^t}\qquad(t\geq u).
\label{equation29}
\end{equation}
Then
\begin{displaymath}
\lambda_1 a_1^j+\ldots+\lambda_k a_k^j+\Lambda_j(\bfz) \equiv  0 \pmod {p^u} \qquad  (1\leq j\leq k).
\end{displaymath}
It follows that
\begin{displaymath}
\max_{1\leq j\leq k}\left\{\left|\lambda_1 a_1^j+\ldots+\lambda_k a_k^j+\Lambda_j(\bfz)\right|_p\right\}\leq p^{-u}<\left|\Delta(1,\ldots,k)\right|_p^2,
\end{displaymath}
and by Lemma \ref{lem1} there exists a unique $(b_1,\ldots,b_k)\in\mathbb{Z}_p^k$ such that
\begin{displaymath}
\lambda_1 b_1^j+\ldots+\lambda_k b_k^j+\Lambda_j(\bfz) = 0 \qquad  (1\leq j\leq k).
\end{displaymath}
And since by (\ref{equation29}) we have $p^{t-u}$ choices for each of $z_{k+1},\ldots,z_s$, it follows that there are at least $p^{(t-u)(s-k)}$ solutions to the congruences in (\ref{equation27}) with $q=p^t$.  Lemma \ref{lem11} follows immediately.
\end{proof}

\vspace{4mm}

\section{The moment integral $J$}
As the moment integral $J$ is identical to the integral at the heart of the Vinogradov mean value theorem for degree $k$, an upper bound for $J$ follows almost immediately from Vinogradov's result.  We need only establish that the associated singular integral $\mathcal{J}(J)$ and singular series $\mathfrak{S}(J)$ are absolutely convergent.

\begin{lemma}
The moment integral $J$ satisfies the bound $J\ll N^{s_0(k)-k(k+1)/2}.$
\label{lem8}
\end{lemma}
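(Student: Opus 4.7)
The plan is to recognize $J$ as the Vinogradov mean value integral of degree $k$ and invoke Vinogradov's mean value theorem directly. By orthogonality, expanding $|g(\bfalpha)|^{s_0(k)}$ and integrating gives
\[
J = \#\left\{(\bfx,\bfy)\in[1,N]^{s_0(k)/2}\times[1,N]^{s_0(k)/2} : \sum_{i} x_i^j = \sum_{i} y_i^j,\ 1 \leq j \leq k\right\},
\]
which is exactly the counting problem at the heart of Vinogradov's mean value theorem. The definition of $s_0(k)$ in (\ref{equation37}) was chosen so that $s_0(k)/2$ is at least the threshold beyond which $\S 3$ of \cite{wooley} establishes the asymptotic formula
\[
J = \mathfrak{S}(J)\,\mathcal{J}(J)\,N^{s_0(k) - k(k+1)/2} + o\bigl(N^{s_0(k) - k(k+1)/2}\bigr),
\]
where $\mathfrak{S}(J)$ and $\mathcal{J}(J)$ are the associated singular series and singular integral.

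As the preamble to the lemma observes, the stated upper bound then follows at once provided $\mathfrak{S}(J)$ and $\mathcal{J}(J)$ are absolutely convergent. For $\mathcal{J}(J)$, I would change variables via $\mu_j = \beta_j N^j$ and apply (\ref{equation8}); the normalised integrand then decays like $(1 + |\mu_1| + \ldots + |\mu_k|)^{-s_0(k)/k}$, so absolute convergence on $\mathbb{R}^k$ follows as soon as $s_0(k)/k > k$. For $\mathfrak{S}(J)$, the bound (\ref{equation5}) gives $|q^{-1}S(q,\bfa)| \ll q^{-1/k + \epsilon}$, so the sum over coprime $\bfa\in[0,q)^k$ is $\ll q^{k - s_0(k)/k + \epsilon s_0(k)}$; summing over $q$ converges once $s_0(k) > k(k+1)$. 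Both inequalities are comfortably satisfied by the value of $s_0(k)$ in (\ref{equation37}), since $s_0(k) \gg k^2 \log k$.

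The main obstacle is merely bookkeeping: one needs to verify that the explicit constant $2k[k(\log k + 2\log\log k)] + 10k^2 + 6$ in (\ref{equation37}) meets the precise threshold for $s_0(k)/2$ used in Wooley's formulation of the asymptotic form of Vinogradov's theorem. The author has arranged this implicitly by his choice of $s_0(k)$, and once it is confirmed the bound $J \ll N^{s_0(k) - k(k+1)/2}$ follows immediately from the asymptotic formula.
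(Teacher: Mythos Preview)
Your approach is essentially identical to the paper's: both invoke the asymptotic formula from \cite{wooley} and then verify absolute convergence of the associated singular series and singular integral via the bounds (\ref{equation5}) and (\ref{equation8}). The only cosmetic difference is that the paper works with the unnormalised singular integral $\mathcal{J}(J)=\int_{\mathbb{R}^k}|w(\bfbeta)|^{s_0(k)}\,d\bfbeta$, which itself carries the factor $N^{s_0(k)-k(k+1)/2}$, whereas you pull that factor out explicitly; and the paper bounds $\mathcal{J}(J)$ by splitting the integrand into a product $\prod_j (1+|\beta_j|N^j)^{-s_0(k)/k^2}$ rather than via your change of variables, but the two devices are interchangeable.
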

\begin{proof}
We first define the singular series and singular integral for $J$ respectively by
\begin{displaymath}
\begin{array}{c}
\displaystyle{\mathfrak{S}(J)=\sum_{q=1}^{\infty}\sum_{\substack{0\leq a_k,\ldots,a_1\leq q\\(q,a_k,\ldots,a_1)=1}}q^{-s_0(k)}\left|S(q,\bfa)\right|^{s_0(k)}},\vspace{1mm}\\
\displaystyle{\mathcal{J}(J)=\int_{\mathbb{R}^k}\left|w(\bfbeta)\right|^{s_0(k)} \, d\bfbeta}.
\end{array}
\end{displaymath}
The proof of Theorem 3 in \cite{wooley} states that
\begin{displaymath}
\int_{\mathbb{T}^k}|g(\bfalpha)|^{s_0(k)}\, d\bfalpha - \mathfrak{S}(J)\mathcal{J}(J)\ll N^{s_0(k)-k(k+1)/2-\tau}
\end{displaymath}
for some $\tau>0$.  We therefore need only prove that $\mathfrak{S}(J)\mathcal{J}(J)\ll N^{s_0(k)-k(k+1)/2}$.

By (\ref{equation8}), the singular integral $\mathcal{J}(J)$ obeys the bound
\begin{displaymath}
\begin{array}{rcl}
\mathcal{J}(J) &\ll & \displaystyle{N^{s_0(k)} \int_{\mathbb{R}^k}(1+|\beta_1|N+\ldots|\beta_k|N^k)^{-s_0(k)/k}\, d\bfbeta}\vspace{1mm}\\
&\ll & \displaystyle{N^{s_0(k)}\prod_{j=1}^k\int_{-\infty}^{\infty}(1+|\beta_j|N^j)^{-s_0(k)/k^2}\, d\beta_j}\vspace{1mm}\\
&\ll & \displaystyle{N^{s_0(k)}\prod_{j=1}^k N^{-j} \,\,=\,\, N^{s_0(k)-k(k+1)/2}}.
\end{array}
\end{displaymath}
It follows that $\mathcal{J}(J)$ converges absolutely.  Meanwhile, by (\ref{equation5}), we have
\begin{displaymath}
\begin{array}{rcl}
\mathfrak{S}(J) &=& \displaystyle{\sum_{q=1}^{\infty}\sum_{\substack{0\leq a_k,\ldots,a_1\leq q\\(q,a_k,\ldots,a_1)=1}}q^{-s_0(k)}\left|S(q,\bfa)\right|^{s_0(k)}}\vspace{1mm}\\
&\ll & \displaystyle{\sum_{q=1}^{\infty} q^k\cdot q^{s_0(k)\cdot(-1/k+\epsilon)}\,\,\ll\,\, 1.}
\end{array}
\end{displaymath}
This establishes the absolute convergence of $\mathfrak{S}(J)$.  We have therefore shown that
\begin{displaymath}
J\ll\mathfrak{S}(J)\mathcal{J}(J)\ll N^{s_0(k)-k(k+1)/2},
\end{displaymath}
which is precisely the bound in Lemma \ref{lem8}.
\end{proof}

\vspace{4mm}

\section{The approximation integral $I$}
Our estimate the approximation integral $I$ is obtained by means of the Hardy-Littlewood circle method.  We dissect the $k$-dimensional unit cube $\mathbb{T}^k$ into the major and minor arcs and then estimate the contribution of each region in turn.

We first define the singular series and singular integral for $I$ respectively by
\begin{displaymath}
\begin{array}{c}
\displaystyle{\mathfrak{S}(I)=\sum_{q=1}^{\infty}\sum_{\substack{0\leq a_k,\ldots,a_1\leq q\\(q,a_k,\ldots,a_1)=1}}q^{-s}\prod_{i=1}^s S_i(q,\bfa)},\vspace{1mm}\\
\displaystyle{\mathcal{J}(I)=\int_{\mathbb{R}^k}\prod_{i=1}^s w_i(\bfbeta) \, d\bfbeta},
\end{array}
\end{displaymath}
where $S_i(q,\bfa)$ and $w_i(\bfbeta)$ are as defined in $\S$4.4.  We establish in the next lemma that the error involved in approximating $I$ by the product $\mathfrak{S}(I)\mathcal{J}(I)$ is of smaller magnitude than the expected main term.

\begin{lemma}
For $s>s_0(k)$, we have
\begin{displaymath}
I - \mathfrak{S}(I)\mathcal{J}(I) \ll N^{s-k(k+1)/2-\Delta}
\end{displaymath}
for some $\Delta>0$.
\label{lem10}
\end{lemma}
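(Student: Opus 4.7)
The plan is to perform the standard Hardy-Littlewood dissection. Splitting
$$I = \int_{\mathfrak{M}} \prod_{i=1}^s g_i \, d\bfalpha + \int_{\mathfrak{m}} \prod_{i=1}^s g_i \, d\bfalpha,$$
I will show that the minor-arc piece is $\ll N^{s-k(k+1)/2 - \Delta_1}$ and that the major-arc piece differs from $\mathfrak{S}(I)\mathcal{J}(I)$ by $\ll N^{s-k(k+1)/2-\Delta_2}$; taking $\Delta = \min(\Delta_1,\Delta_2)$ then yields the lemma.

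For the minor arcs, the contrapositive of Lemma \ref{lem4} gives $\sup_{\bfalpha \in \mathfrak{m}}|g(\bfalpha)| \ll N^{1-\sigma(k)}$, and since a good rational approximation to $\lambda_i\bfalpha$ forces one to $\bfalpha$ up to a constant depending only on $\lambda_i$, the same bound holds for $\sup_{\bfalpha\in\mathfrak{m}}|g_i(\bfalpha)|$ for each $i$. Extracting $s - s_0(k)$ factors at their minor-arc supremum and applying H\"older's inequality together with the change of variables $\bfalpha \mapsto \lambda_i\bfalpha$ and Lemma \ref{lem8} yields
$$\int_{\mathfrak{m}}\left|\prod_{i=1}^s g_i\right|\, d\bfalpha \ll N^{(1-\sigma(k))(s-s_0(k))}\cdot J \ll N^{s-k(k+1)/2-\Delta_1},$$
with $\Delta_1 = \sigma(k)(s-s_0(k)) > 0$ since $s > s_0(k)$.

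On the major arcs I would employ the telescoping identity
$$\prod_{i=1}^s g_i - \prod_{i=1}^s q^{-1}S_i w_i = \sum_{j=1}^s \left(g_j - q^{-1}S_j w_j\right)\prod_{i<j}\left(q^{-1}S_iw_i\right)\prod_{i>j}g_i,$$
together with the estimates $|g_i|,|q^{-1}S_iw_i| \ll N$ on the idle factors, the bound $\ll q(1 + \sum_j|\beta_j|N^j) \ll qN^{\delta(k)}$ on the distinguished factor via (\ref{equation9}), and the measure bound $\mathrm{meas}\,\mathfrak{M}(q,\bfa) \ll q^{-k}N^{k\delta(k)-k(k+1)/2}$ with $q \leq N^{\delta(k)}$. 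By the choice of $\delta(k)$ in (\ref{equation36}), the accumulated replacement error is $\ll N^{s-k(k+1)/2-\Delta_2}$ for some $\Delta_2 > 0$. Completing the truncated sum over $q$ to $\mathfrak{S}(I)$ and the truncated integral in $\bfbeta$ to $\mathcal{J}(I)$ produces absolutely convergent tails by (\ref{equation5}) and (\ref{equation8}) together with $s > s_0(k)$, each of which contributes a further negative power of $N$.

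The step I expect to require the most care is the replacement on the major arcs: the pointwise error in (\ref{equation9}) must be propagated through all $s$ factors while preserving a genuine power saving over $N^{s-k(k+1)/2}$, and this demands a delicate interplay between the major-arc width $N^{\delta(k)}$, the moment bound of Lemma \ref{lem8}, and the Weyl-type bound (\ref{equation5}). Once this step is under control, the tail estimates needed to complete $\mathfrak{S}(I)$ and $\mathcal{J}(I)$ are essentially those already appearing in the proof of Lemma \ref{lem8}.
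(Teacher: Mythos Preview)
Your proposal is correct and follows essentially the same route as the paper's proof: Hardy--Littlewood dissection, a Weyl-type supremum bound on the minor arcs combined with the mean-value estimate of Lemma~\ref{lem8}, telescoping replacement on the major arcs via (\ref{equation9}), and completion of the singular series and integral using (\ref{equation5}) and (\ref{equation8}). The only cosmetic differences are that the paper extracts a single factor on the minor arcs rather than $s-s_0(k)$ of them, and it invokes Lemma~\ref{lem4} with $\sigma(k)/2$ in place of $\sigma(k)$ so that the extra factor $|\lambda_i|$ in $q'$ is absorbed into the major-arc height $N^{\delta(k)}$---exactly the device your phrase ``up to a constant depending only on $\lambda_i$'' is gesturing at.
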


\begin{proof}
We retain the definitions of $\mathfrak{M}$ and $\mathfrak{m}$ from (\ref{equation15}).  Define 
\begin{displaymath}
\Lambda=\max_{1\leq i\leq s}\left|\lambda_i\right|.
\end{displaymath}
By suitable re-numbering, we may assume that
\begin{displaymath}
\max_{1\leq i\leq s}\left(\sup_{\bfalpha\in\mathfrak{m}}\left|g_i(\bfalpha)\right|\right)=\sup_{\bfalpha\in\mathfrak{m}}\left|g_1(\bfalpha)\right|.
\end{displaymath}
If we apply H\"{o}lder's inequality, we see that the integral over the minor arcs $\mathfrak{m}$ is bounded by
\begin{displaymath}
\int_{\mathfrak{m}}\prod_{i=1}^s |g_i| \, d\bfalpha\ll\left(\sup_{\bfalpha\in\mathfrak{m}}\left|g_1(\bfalpha)\right|\right)\prod_{2\leq i\leq s} \left(\int_{\mathbb{T}^k}|g_i|^{s-1} \, d\bfalpha\right)^{1/(s-1)}.
\end{displaymath}

Now, by replacing $\sigma(k)$ with $\sigma(k)/2$ in the statement of Lemma \ref{lem4}, we see that if $|g_1(\bfalpha)|\geq N^{1-\sigma(k)/2}$, then there exist $q\in\mathbb{N}$, $q<N^{\delta(k)/2}$ and $\bfa\in\mathbb{Z}^k$ such that $(q,a_k,\ldots,a_1)=1$ and $|q\lambda_1\alpha_j-a_j|<N^{\delta(k)/2-j}$ for $1\leq j\leq k$.  We now define
\begin{displaymath}
Q=(q\lambda_1,a_k,\ldots,a_1)
\end{displaymath}
and let $\sgn(\lambda_1)=\lambda_1/|\lambda_1|$ denote the signum function of $\lambda_1$.  We further define
\begin{displaymath}
q^{\prime}=q\left|\lambda_1\right|/Q,\qquad a_j^{\prime}=\sgn\left(\lambda_1\right)\cdot a_j/Q \quad(1\leq j\leq k).
\end{displaymath}
If we assume that $N$ is sufficiently large that $\Lambda<N^{\delta(k)/2}$, then we have $q^{\prime}<N^{\delta(k)}$, $0\leq a_k^{\prime},\ldots,a_1^{\prime}<q^{\prime}$, $(q^{\prime},a_k^{\prime},\ldots,a_1^{\prime})=1$, and $|q^{\prime}\alpha_j-a_j^{\prime}|<N^{\delta(k)-j}$ for $1\leq j\leq k$.  It follows by (\ref{equation15}) that $\bfalpha\in\mathfrak{M}$, and therefore,
\begin{displaymath}
\sup_{\bfalpha\in\mathfrak{m}}\left|g_1(\bfalpha)\right|\ll N^{1-\sigma(k)/2}.
\end{displaymath}
Meanwhile, since $s-1\geq s_0(k)$, we have by Lemma \ref{lem8} and the trivial estimate $|g(\bfalpha)|\leq N$ that
\begin{displaymath}
\prod_{2\leq i\leq s} \left(\int_{\mathbb{T}^k}|g_i|^{s-1} \, d\bfalpha\right)^{1/(s-1)} \ll \int_{\mathbb{T}^k}|g(\bfalpha)|^{s-1}\, d\bfalpha \ll N^{s-1-k(k+1)/2},
\end{displaymath}
where the implied constants depend on the $\lambda_i$.  Hence,
\begin{equation}
\int_{\mathfrak{m}}\prod_{i=1}^s |g_i| \, d\bfalpha\ll N^{s-k(k+1)/2-\tau}
\label{equation23}
\end{equation}
for some $\tau>0$.

On the major arcs $\mathfrak{M}$, we retain our previous definitions of $S_i(q,\bfa)$ and $w_i(\bfbeta)$, where $\beta_j=\alpha_j-a_j/q$ for $1\leq j\leq k$.  It therefore follows by (\ref{equation9}) that
\begin{displaymath}
\begin{array}{ll}
\lefteqn{\displaystyle{\int_{\mathfrak{M}}\left|\prod_{i=1}^s g_i-q^{-s}\prod_{i=1}^s S_i(q,\bfa)w_i(\bfbeta)\right|\, d\bfalpha}}\vspace{1mm}\\
&\ll\,\, \displaystyle{\int_{\mathfrak{M}}q(1+|\beta_1|N+\ldots+|\beta_k|N^k)N^{s-1} \, d\bfalpha\,\,\ll\,\, N^{s-1+\delta(k)}}\cdot mes(\mathfrak{M}).
\end{array}
\end{displaymath}
And since
\begin{displaymath}
\begin{array}{lll}
mes(\mathfrak{M})&\ll&\displaystyle{\sum_{q=1}^{N^{\delta(k)}}\sum_{\substack{0\leq a_k,\ldots,a_1\leq q\\(q,a_k,\ldots,a_1)=1}} q^{-k} N^{k\delta(k)-k(k+1)/2}}\vspace{1mm}\\
&\ll&\displaystyle{\sum_{q=1}^{N^{\delta(k)}}N^{k\delta(k)-k(k+1)/2}\,\,\ll\,\, N^{(k+1)\delta(k)-k(k+1)/2}},
\end{array}
\end{displaymath}
we obtain the bound
\begin{equation}
\begin{array}{ll}
\lefteqn{\displaystyle{\int_{\mathfrak{M}}\left|\prod_{i=1}^s g_i-q^{-s}\prod_{i=1}^s S_i(q,\bfa)w_i(\bfbeta)\right|\, d\bfalpha}}\vspace{1mm}\\
&\ll\,\, N^{s-1+\delta(k)}\cdot N^{(k+1)\delta(k)-k(k+1)/2}\,\,\ll\,\, N^{s-k(k+1)/2-\tau^{\prime}}
\end{array}
\label{equation24}
\end{equation}
for some $\tau^{\prime}>0$, where the implied constants depend on the $\lambda_i$.  That $\tau^{\prime}>0$ follows by observing that $(k+2)\delta(k)<(k+2)/8k<1$ for $k\geq 2$.

We now define the truncated singular series and truncated singular integral for $I$ respectively by
\begin{displaymath}
\begin{array}{c}
\displaystyle{\mathfrak{S}(I,N^{\delta(k)})=\sum_{q=1}^{N^{\delta(k)}}\sum_{\substack{0\leq a_k,\ldots,a_1\leq q\\(q,a_k,\ldots,a_1)=1}}q^{-s}\prod_{i=1}^s S_i(q,\bfa)},\vspace{1mm}\\
\displaystyle{\mathcal{J}(I,N^{\delta(k)},q)=\int_{-q^{-1} N^{\delta(k)-k}}^{q^{-1} N^{\delta(k)-k}}\cdots\int_{-q^{-1} N^{\delta(k)-1}}^{q^{-1} N^{\delta(k)-1}}\prod_{i=1}^s w_i(\bfbeta)\, d\bfbeta}.
\end{array}
\end{displaymath}

By the definition of the truncated singular integral, we have
\begin{equation}
\begin{array}{lll}
\lefteqn{\displaystyle{\int_{\mathfrak{M}} q^{-s}\prod_{i=1}^s S_i(q,\bfa)w_i(\bfbeta)\, d\bfalpha}}\vspace{1mm}\\
&=&\displaystyle{\sum_{q=1}^{N^{\delta(k)}}\sum_{\substack{0\leq a_k,\ldots,a_1\leq q\\(q,a_k,\ldots,a_1)=1}}\int_{-q^{-1} N^{\delta(k)-k}}^{q^{-1} N^{\delta(k)-k}}\cdots\int_{-q^{-1} N^{\delta(k)-1}}^{q^{-1} N^{\delta(k)-1}}q^{-s}\prod_{i=1}^s S_i(q,\bfa)w_i(\bfbeta)\, d\bfbeta}\vspace{1mm}\\
&=&\displaystyle{\sum_{q=1}^{N^{\delta(k)}}S(q)\mathcal{J}(I,N^{\delta(k)},q)},
\end{array}
\label{equation32}
\end{equation}
where $S(q)$ is as defined in (\ref{equation13}).

Now, by (\ref{equation8}),
\begin{equation}
\begin{array}{lll}
\mathcal{J}(I) &\ll& \displaystyle{N^s\int_{\mathbb{R}^k}(1+|\beta_1|N+\ldots+|\beta_k|N^k)^{-s/k}\, d\bfbeta}\vspace{1mm}\\
&\ll& \displaystyle{N^s\prod_{j=1}^k\int_{\mathbb{R}}(1+|\beta_j|N^j)^{-s/k^2}d\beta_j}\vspace{1mm}\\
&\ll& \displaystyle{N^s\prod_{j=1}^k N^{-j}}=N^{s-k(k+1)/2}.
\end{array}
\label{equation25}
\end{equation}
It follows that the singular integral $\mathcal{J}(I)$ converges absolutely.  Moreover,
\begin{equation}
\begin{array}{ll}
\lefteqn{|\mathcal{J}(I)-\mathcal{J}(I,N^{\delta(k)},q)|}\vspace{1mm}\\
&\ll\,\, N^s\displaystyle{\sum_{j=1}^k\int_{q^{-1} N^{\delta(k)-j}}^{\infty} (1+|\beta_j|N^j)^{-s/k^2}d\beta_j\cdot\prod_{\substack{1\leq i\leq k\\i\neq j}}\int_0^{\infty}(1+|\beta_i|N^i)^{-s/k^2}\, d\beta_i}\vspace{1mm}\\
&\ll\,\, N^s\displaystyle{\sum_{j=1}^k q N^{-j-\delta(k)}\prod_{\substack{1\leq i\leq k\\i\neq j}}N^{-i}}\,\,\ll\,\, q N^{s-k(k+1)/2-\delta(k)},
\end{array}
\label{equation31}
\end{equation}
where the implied constants depend on the $\lambda_i$.

It remains to analyse the singular series.  By (\ref{equation5}),
\begin{equation}
S(q)\ll\sum_{\substack{0\leq a_k,\ldots,a_1\leq q\\(q,a_k,\ldots,a_1)=1}}q^{-s}\cdot q^{s(1-1/k+\epsilon)}\ll q^k\cdot q^{-s/k+s\epsilon}\ll q^{-9k},
\label{equation14}
\end{equation}
provided that $s>s_0(k)>10k^2$ and $\epsilon$ is chosen to be sufficiently small.  Therefore, by the multiplicative nature of $S(q)$,
\begin{displaymath}
\sum_{q\leq N^{\delta(k)}}q|S(q)|\leq\prod_{p\leq N^{\delta(k)}}\left(1+\sum_{l=1}^{\infty}p^l\left|S(p^l)\right|\right)\ll\prod_{p\leq N^{\delta(k)}}(1+Cp^{-9k+1})
\end{displaymath}
for some constant $C$.  Hence,
\begin{displaymath}
\sum_{q\leq N^{\delta(k)}}q|S(q)|\ll 1,
\end{displaymath}
where the implied constant depends on the $\lambda_i$.  Combining this result with (\ref{equation31}) gives
\begin{equation}
\begin{array}{lll}
\displaystyle{\sum_{q\leq N^{\delta(k)}}S(q)\left(\mathcal{J}(I)-\mathcal{J}(I,N^{\delta(k)},q)\right)}&\ll&\displaystyle{\sum_{q\leq N^{\delta(k)}} |S(q)|\cdot qN^{s-k(k+1)/2-\delta(k)}}\vspace{1mm}\\
&\ll& N^{s-k(k+1)/2-\delta(k)}.
\end{array}
\label{equation33}
\end{equation}

Meanwhile, by (\ref{equation14}),
\begin{equation}
\left|\mathfrak{S}(I)-\mathfrak{S}(I,N^{\delta(k)})\right|=\sum_{q\geq N^{\delta(k)}}|S(q)|\ll\sum_{q\geq N^{\delta(k)}}q^{-9k}\ll N^{-\delta(k)},
\label{equation12}
\end{equation}
where the implied constant depends on the $\lambda_i$.  Moreover,
\begin{displaymath}
\mathfrak{S}(I)\ll\sum_{q=1}^{\infty} q^{-9k}\ll 1.
\end{displaymath}
We have therefore demonstrated the absolute convergence of the singular series $\mathfrak{S}(I)$.  By combining this bound with (\ref{equation25}), (\ref{equation33}), and (\ref{equation12}), we have established
\begin{displaymath}
\begin{array}{lll}
\lefteqn{\displaystyle{\sum_{q\leq N^{\delta(k)}}S(q)\mathcal{J}(I,N^{\delta(k)},q)-\mathfrak{S}(I)\mathcal{J}(I)}}\vspace{1mm}\\
&&\ll\,\,\displaystyle{\sum_{q\leq N^{\delta(k)}}|S(q)|\left|\mathcal{J}(I)-\mathcal{J}(I,N^{\delta(k)},q)\right|+\left|\mathcal{J}(I)\right|\left|\mathfrak{S}(I)-\mathfrak{S}(I,N^{\delta(k)})\right|}\vspace{1mm}\\
&&\ll\,\, N^{s-k(k+1)/2-\delta(k)}+N^{s-k(k+1)/2}\cdot N^{-\delta(k)}\,\,\ll\,\, N^{s-k(k+1)/2-\delta(k)}.
\end{array}
\end{displaymath}
Lemma \ref{lem10} follows by combining this result with (\ref{equation23}), (\ref{equation24}), and (\ref{equation32}).
\end{proof}

\begin{lemma}
For some constant $\mathcal{C}>0$ dependent at most on the $\lambda_i$, we have $\mathcal{J}(I)=\mathcal{C}N^{s-k(k+1)/2}$.
\label{lem14}
\end{lemma}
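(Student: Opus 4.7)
The plan is to first extract the anticipated power of $N$ by a scaling substitution, and then establish that the resulting dimensionless constant is strictly positive by appealing to the non-singular real solution supplied by Lemma \ref{lem7}.

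For the first step, substitute $\beta_j=N^{-j}\eta_j$ in the definition of $\mathcal{J}(I)$, noting that the Jacobian of this change of variables contributes the factor $N^{-k(k+1)/2}$. Simultaneously, in each inner integral defining $w_i(\bfbeta)$, substitute $\gamma=Nu$ to obtain
\begin{displaymath}
w_i(\bfbeta)=N\int_0^1 e\bigl(\lambda_i(\eta_k u^k+\cdots+\eta_1 u)\bigr)\,du=N\,\tilde w_i(\bfeta),
\end{displaymath}
so each factor contributes one power of $N$. Collecting these factors yields
\begin{displaymath}
\mathcal{J}(I)=N^{s-k(k+1)/2}\cdot\mathcal{C},\qquad \mathcal{C}:=\int_{\mathbb{R}^k}\prod_{i=1}^s\tilde w_i(\bfeta)\,d\bfeta.
\end{displaymath}
Absolute convergence of $\mathcal{C}$ is immediate: the bound (\ref{equation8}) applied with $N=1$ shows $\tilde w_i(\bfeta)\ll(1+|\eta_1|+\ldots+|\eta_k|)^{-1/k}$, so the calculation in (\ref{equation25}) with $N=1$ furnishes a finite upper bound on $\mathcal{C}$ depending only on the $\lambda_i$.

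The main obstacle is showing $\mathcal{C}>0$. I plan to realise $\mathcal{C}$ as a genuine density of real solutions by truncating and applying Fourier inversion. For each $T>0$, expanding the product of inner integrals and interchanging the order of integration (permissible on a bounded box) gives
\begin{displaymath}
\mathcal{C}(T):=\int_{[-T,T]^k}\prod_{i=1}^s\tilde w_i(\bfeta)\,d\bfeta=\int_{[0,1]^s}\prod_{j=1}^k\frac{\sin(2\pi T L_j(\bfu))}{\pi L_j(\bfu)}\,d\bfu.
\end{displaymath}
By the absolute convergence just established, $\mathcal{C}(T)\to\mathcal{C}$ as $T\to\infty$. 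Invoking Lemma \ref{lem7} we fix a non-singular real solution $\bfeta_0\in(0,1)^s$ to the system (\ref{equation1}); after relabelling, suppose the Jacobian with respect to $(u_{i_1},\ldots,u_{i_k})$ is non-zero at $\bfeta_0$. By the implicit function theorem, on a small neighbourhood $U$ of $\bfeta_0$ inside $(0,1)^s$ the solution set $\{L_1=\cdots=L_k=0\}$ is a smooth $(s-k)$-dimensional manifold parametrised by the remaining $s-k$ coordinates, on which the Jacobian is bounded below by a positive constant. A standard Fourier-theoretic localisation (write $L_j(\bfu)$ as a smooth local coordinate transverse to the manifold, and use the fact that $T^{-1}\cdot\tfrac{\sin 2\pi T x}{\pi x}$ tends to a positive delta-like kernel when integrated against continuous compactly supported functions) shows that the contribution of $U$ to $\mathcal{C}(T)$ converges to a strictly positive limit proportional to the surface measure of the solution variety at $\bfeta_0$ divided by $|\Delta(i_1,\ldots,i_k)|$. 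The tail contribution from $[0,1]^s\setminus U$ remains bounded as $T\to\infty$ (by the same Fourier inversion argument applied to continuous functions with no solutions in their closed support), so we may choose $U$ small enough that its positive contribution dominates. We conclude $\mathcal{C}>0$, and the constant depends only on $\bfeta_0$ and the $\lambda_i$, completing the proof.
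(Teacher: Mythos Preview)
Your scaling step and overall strategy---change of variables to extract $N^{s-k(k+1)/2}$, then Fourier inversion combined with the implicit function theorem at the non-singular point from Lemma~\ref{lem7}---is exactly the approach the paper takes.

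There is, however, a genuine slip in the positivity argument. You decompose $[0,1]^s$ into a small neighbourhood $U$ of $\bfeta_0$ and its complement, assert that the $U$-contribution to $\mathcal{C}(T)$ has a strictly positive limit while the complement-contribution stays bounded, and then propose to ``choose $U$ small enough that its positive contribution dominates''. This is backwards: shrinking $U$ makes its limit \emph{smaller}, so you cannot force it to dominate a fixed bounded tail this way. Worse, your parenthetical justification for the tail---``continuous functions with no solutions in their closed support''---is false in general, since $[0,1]^s\setminus U$ will typically contain almost all of the solution variety.

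The fix, which is what the paper does, is to avoid the comparison altogether: the Fourier integral formula (applied $k$ times) identifies the full limit $\mathcal{C}$ with a non-negative surface integral over the entire real solution set inside $[0,1]^s$, and the implicit-function-theorem patch near $\bfeta_0$ shows that this integral is strictly positive. No localisation-and-domination argument is needed---the complement contributes non-negatively, not adversarially.
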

\begin{proof}
By the change of variable $N\gamma_i$ for $\gamma_i$, we obtain
\begin{displaymath}
w_i(\bfbeta)=N\int_0^1 e(\lambda_i\beta_k(N\gamma_i)^k+\ldots+\lambda_i\beta_1(N\gamma_i)) \, d\gamma_i\qquad(1\leq i\leq s),
\end{displaymath}
and by the changes of variable $\beta_j N^{-j}$ for $\beta_j$ $(1\leq j\leq k)$, we deduce that $\mathcal{J}(I)=\mathcal{C}(N)\cdot N^{s-k(k+1)/2}$, where
\begin{displaymath}
\mathcal{C}(N)=\int_{\mathbb{R}^k}\int_{[0,1)^s}e\left(\beta_k\cdot L_k(\bfgamma)+\ldots+\beta_1\cdot L_1(\bfgamma)\right)\, d\bfgamma \, d\bfbeta.
\end{displaymath}
We know by Lemma \ref{lem7} that there exists a non-singular real solution $(\xi_1,\ldots,\xi_s)\in(0,1)^s$ to the system (\ref{equation1}).  By the Implicit Function Theorem (see, for example, Theorem 13.7 in \cite{apostol}), there is an $(s-k)$-dimensional subspace $\mathcal{S}\subset[0,1)^s$ of positive volume containing the solution $(\xi_1,\ldots,\xi_s)$ in which every point satisfies the system (\ref{equation1}).  Therefore, by applying $k$ iterations of the Fourier integral formula in the form
\begin{displaymath}
\lim_{\Omega\rightarrow\infty}\int_{-T}^T\int_{-\Omega}^{\Omega}F(t)e(t\omega)\, d\omega\, dt=F(0)
\end{displaymath}
to the integral $\mathcal{C}(N)$, we deduce that
\begin{displaymath}
\mathcal{C}(N)=\int_{\mathcal{S}}\, d\mathcal{S}=mes(\mathcal{S})>0.
\end{displaymath}
Setting $\mathcal{C}=mes(\mathcal{S})$ gives the desired result.
\end{proof}

To prove that $\mathfrak{S}(I)=\mathfrak{S}>0$ for some constant $\mathfrak{S}$, we require the following lemmas.

\begin{lemma}
For each prime $p$, define
\begin{displaymath}
T(p)=\sum_{h=0}^{\infty}S(p^h).
\end{displaymath}
Then $T(p)$ and $\prod_p T(p)$ converge absolutely, and $\mathfrak{S}(I)=\prod_p T(p)$.  Moreover, there is a positive constant $p_0$ depending at most on $\lambda_1,\ldots,\lambda_s$ such that
\begin{displaymath}
\frac{1}{2}<\prod_{p\geq p_0}T(p)<\frac{3}{2}.
\end{displaymath}
\label{lem13}
\end{lemma}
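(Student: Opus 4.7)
The plan is to proceed in two stages: first establish absolute convergence of $T(p)$ and of the infinite product $\prod_p T(p)$, together with the Euler factorisation $\mathfrak{S}(I) = \prod_p T(p)$; then obtain the two-sided bound on the tail product by showing $T(p)$ is close to $1$ when $p$ is large.

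\emph{Absolute convergence and Euler product.} The bound (\ref{equation14}), namely $S(q) \ll q^{-9k}$, specialises with $q = p^h$ to $S(p^h) \ll p^{-9kh}$, so
\[
\sum_{h=0}^{\infty} \left|S(p^h)\right| \leq 1 + C \sum_{h=1}^{\infty} p^{-9kh} \leq 1 + 2C p^{-9k},
\]
which establishes absolute convergence of $T(p)$ uniformly in $p$. Since $S(q)$ is multiplicative by Lemma \ref{lem12} and $\mathfrak{S}(I) = \sum_{q=1}^{\infty} S(q)$ is absolutely convergent by the estimate following (\ref{equation14}) in the proof of Lemma \ref{lem10}, the standard rearrangement argument for multiplicative functions yields
\[
\mathfrak{S}(I) = \sum_{q=1}^{\infty} S(q) = \prod_p \left(\sum_{h=0}^{\infty} S(p^h)\right) = \prod_p T(p).
\]
The estimate $|T(p) - 1| \leq \sum_{h \geq 1} |S(p^h)| \ll p^{-9k}$, combined with the convergence of $\sum_p p^{-9k}$, then shows that this infinite product converges absolutely.

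\emph{Bound on the tail product.} Choose $p_0$ large enough in terms of the $\lambda_i$ (through the implicit constant above) so that $|T(p) - 1| < 1/4$ for every $p \geq p_0$, and enlarge $p_0$ further so that $\sum_{p \geq p_0} |T(p) - 1|$ is smaller than $\tfrac{1}{3} \log(3/2)$, which is possible since this tail tends to $0$. Using the elementary inequality $|\log(1 + x)| \leq 2|x|$ for $|x| < 1/2$, we deduce
\[
\left|\log \prod_{p \geq p_0} T(p)\right| \leq \sum_{p \geq p_0} |\log T(p)| \leq 2 \sum_{p \geq p_0} |T(p) - 1| < \log(3/2),
\]
giving the claimed inequality $\tfrac{1}{2} < \prod_{p \geq p_0} T(p) < \tfrac{3}{2}$.

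None of these stages is essentially difficult: the hard analytic work is already contained in the bound (\ref{equation14}) and the multiplicativity supplied by Lemma \ref{lem12}. The only point deserving genuine care is the interchange of summation and product in the Euler factorisation, which rests squarely on the absolute convergence of $\sum_q |S(q)|$ established in the course of proving Lemma \ref{lem10}.
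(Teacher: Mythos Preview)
Your proof is correct and follows essentially the same approach as the paper: both rely on the bound (\ref{equation14}) for absolute convergence and on the multiplicativity from Lemma \ref{lem12} for the Euler factorisation. The paper simply cites Theorem 2.4 of \cite{vaughan} for the Euler product identity and the tail bound, whereas you have written out the standard details of that argument explicitly.
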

\begin{proof}
The absolute convergence of $T(p)$ follows from the upper bound on $S(q)$ in (\ref{equation14}).  The other results follow from Theorem 2.4 of \cite{vaughan}.
\end{proof}

\begin{lemma}
For any $q\in\mathbb{N}$, we have
\begin{displaymath}
\sum_{d|q}S(d)=q^{k-s}M_n(q).
\end{displaymath}
\label{lem5}
\end{lemma}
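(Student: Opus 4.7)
The plan is to prove the identity by expressing $M_n(q)$ as a complete exponential sum via character orthogonality, and then partitioning that sum according to the greatest common divisor of its character parameters to recover the quantity $S(d)$ for divisors $d$ of $q$.

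First I would detect the system of congruences (\ref{equation27}) using orthogonality, writing the indicator that a given integer is $\equiv 0 \pmod{q}$ as $q^{-1}\sum_{a=1}^{q} e_q(a\cdot\,)$. After interchanging the product over $i$ with the sum over $\bfx$, the inner sum over each $x_i$ becomes precisely $S_i(q,\bfa)$, giving
\[
M_n(q) = q^{-k}\sum_{\bfa\in(\mathbb{Z}/q\mathbb{Z})^k}\prod_{i=1}^s S_i(q,\bfa),
\]
where the $\bfa$-sum now ranges over \emph{all} $k$-tuples modulo $q$, with no coprimality condition.

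Next I would partition this sum according to $Q=(q,a_k,\ldots,a_1)$. For each $Q\mid q$, the substitution $a_j = Q a_j'$ and $d := q/Q$ gives a bijection between the $\bfa$ with $(q,\bfa)=Q$ and the primitive tuples $\bfa'\in[0,d)^k$ with $(d,a_k',\ldots,a_1')=1$. A brief periodicity check shows $S_i(q,Q\bfa') = Q\cdot S_i(d,\bfa')$: the argument satisfies $e_q(\lambda_i Q a_j' m^j) = e_d(\lambda_i a_j' m^j)$, which is periodic of period $d$ in $m$, so the sum over $m=1,\ldots,q$ collapses to $Q$ copies of the sum over $m=1,\ldots,d$. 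Substituting and re-indexing by $d\mid q$ yields
\[
M_n(q) = q^{-k}\sum_{d\mid q}(q/d)^s\sum_{\substack{0\leq a_k',\ldots,a_1'<d\\(d,a_k',\ldots,a_1')=1}}\prod_{i=1}^s S_i(d,\bfa') = q^{-k}\sum_{d\mid q}(q/d)^s\cdot d^s\cdot S(d),
\]
where the inner sum has been identified with $d^s S(d)$ using the definition (\ref{equation13}). Since $(q/d)^s\cdot d^s = q^s$, the right-hand side equals $q^{s-k}\sum_{d\mid q}S(d)$, and dividing through by $q^{s-k}$ produces the claimed identity.

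The only step requiring care is the periodicity identity $S_i(q,Q\bfa') = Q\cdot S_i(d,\bfa')$, but this is immediate from reducing $Q/q$ to $1/d$ inside the exponential; no real obstacle arises. The lemma is essentially the standard reciprocal relationship between complete character sums with primitive parameters and counts of solutions to congruences, and could alternatively be viewed as a Möbius inversion pairing once both sides are recognised as multiplicative in $q$.
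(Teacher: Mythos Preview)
Your proof is correct and is precisely the standard argument underlying Lemma 2.12 of Vaughan, which the paper simply cites as a ``slight modification'' without giving details. You have supplied exactly the orthogonality-plus-gcd-partition computation that the reference carries out, so there is no methodological difference to remark on.
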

\begin{proof}
Recalling (\ref{equation27}), this is a slight modification of Lemma 2.12 in \cite{vaughan}.
\end{proof}

By Lemma \ref{lem5} and the definition of $T(p)$, we have
\begin{displaymath}
T(p)=\lim_{t\rightarrow\infty} p^{(k-s)t} M_n(p^t),
\end{displaymath}
provided this limit exists.  It follows by Lemma \ref{lem11} that
\begin{displaymath}
T(p)\geq p^{u(p)\cdot(k-s)}>0,
\end{displaymath}
and so by Lemma \ref{lem13} we have $\mathfrak{S}>0$.  Combining this result with Lemmas \ref{lem10} and \ref{lem14}, we deduce the following lemma.

\begin{lemma}
The approximation integral $I$ satisfies the lower bound
\begin{displaymath}
I\gg\mathcal{C}\mathfrak{S}N^{s-k(k+1)/2},
\end{displaymath}
where $\mathcal{C}$ and $\mathfrak{S}$ are positive constants depending at most on the $\lambda_i$.
\label{lem2}
\end{lemma}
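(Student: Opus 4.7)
The plan is to deduce the lower bound directly by combining the three ingredients already assembled in this section: Lemma \ref{lem10}, which yields $I - \mathfrak{S}(I)\mathcal{J}(I) \ll N^{s - k(k+1)/2 - \Delta}$ for some $\Delta > 0$; Lemma \ref{lem14}, which evaluates $\mathcal{J}(I) = \mathcal{C} N^{s - k(k+1)/2}$ with $\mathcal{C} > 0$; and the product formula $\mathfrak{S}(I) = \prod_p T(p)$ from Lemma \ref{lem13}, together with the positivity of each $T(p)$ that has already been argued in the paragraph preceding the statement. No new analytic input is required, so the task reduces to verifying that the main term dominates the error.

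First I would substitute the evaluation from Lemma \ref{lem14} into the conclusion of Lemma \ref{lem10} to obtain $I = \mathcal{C}\,\mathfrak{S}(I)\, N^{s - k(k+1)/2} + O(N^{s - k(k+1)/2 - \Delta})$. Next I would set $\mathfrak{S} = \mathfrak{S}(I)$ and retrace the positivity argument already laid out: Lemma \ref{lem13} furnishes the Euler product representation and sandwiches the tail $\prod_{p \geq p_0} T(p)$ between $1/2$ and $3/2$, while Lemmas \ref{lem5} and \ref{lem11} together force $T(p) = \lim_{t \to \infty} p^{(k-s)t} M_n(p^t) \geq p^{u(p)(k-s)} > 0$ for each prime $p < p_0$. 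Multiplying over the finitely many small primes and combining with the tail bound shows that $\mathfrak{S}$ is strictly positive and depends only on the $\lambda_i$.

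With both $\mathcal{C}$ and $\mathfrak{S}$ now established as positive constants depending at most on the $\lambda_i$, the remaining step is the trivial estimate
\[
I \geq \mathcal{C}\,\mathfrak{S}\, N^{s - k(k+1)/2} - C\, N^{s - k(k+1)/2 - \Delta} \gg \mathcal{C}\,\mathfrak{S}\, N^{s - k(k+1)/2},
\]
where $C$ is the implicit constant from Lemma \ref{lem10}; the inequality holds as soon as $N$ is large enough in terms of the $\lambda_i$ that $C N^{-\Delta} < \tfrac{1}{2}\mathcal{C}\mathfrak{S}$. I do not anticipate a genuine obstacle, since every ingredient is in place; the only bookkeeping point is to ensure that the ``$N$ sufficiently large'' threshold in Theorems \ref{thm1} and \ref{thm2} is chosen to dominate both the threshold implicit in Lemma \ref{lem10} and this crossover point.
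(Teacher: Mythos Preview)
Your proposal is correct and mirrors the paper's own argument essentially verbatim: the paper also simply combines Lemma~\ref{lem10}, Lemma~\ref{lem14}, and the positivity of $\mathfrak{S}(I)$ (established via Lemmas~\ref{lem13}, \ref{lem5}, and \ref{lem11} exactly as you describe) to deduce Lemma~\ref{lem2}. There is nothing to add.
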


\vspace{4mm}

\section{Putting everything together}
In the previous sections, we obtained an upper bound for the error $E(\bfalpha)$, an upper bound for the moment integral $J$, and a lower bound for the approximation integral $I$.  We are therefore ready to combine these estimates to obtain a lower bound on the number of solutions $\mathcal{N}$ in $\mathcal{A}_N^s$ to the system (\ref{equation1}) when the $k$-degree uniformity parameter $\mathfrak{a}$ is suitably bounded in terms of the density $\delta_N$.  If $\mathfrak{a}$ does not obey this bound, then we show that we may find a proper arithmetic progression $\mathcal{P}\subset[1,N]$ such that the density of $\mathcal{A}$ in $\mathcal{P}$ is larger than $\delta_N$.  We show that we may iterate this process until either we reduce to the uniform case or the densities of the new sets approach 1.

We begin by combining the results of Lemmas \ref{lem9}, \ref{lem8}, and \ref{lem2} to re-write the inequality in (\ref{equation11}) as
\begin{equation}
\mathcal{C}\mathfrak{S}\delta_N^s N^{s-k(k+1)/2}-\mathcal{N}\ll 2\mathfrak{a}^{2^{-k-1}}\cdot\delta_N^{s-1-s_0(k)}N^{s-k(k+1)/2}.
\label{equation4}
\end{equation}
Suppose that $\mathfrak{a}\leq(\mathcal{C}\mathfrak{S}\delta_N^{s_0(k)+2}/4)^{2^{k+1}}$.  Substituting this bound into (\ref{equation4}) yields
\begin{displaymath}
\mathcal{C}\mathfrak{S}\delta_N^s N^{s-k(k+1)/2}-\mathcal{N}\ll \frac{\mathcal{C}\mathfrak{S}}{2}\delta_N^{s+1}N^{s-k(k+1)/2},
\end{displaymath}
or, equivalently,
\begin{displaymath}
\mathcal{C}\mathfrak{S}\delta_N^s N^{s-k(k+1)/2}\cdot\left(1-\delta_N/2\right)\ll\mathcal{N}.
\end{displaymath}
We note that, as $0\leq\delta_N\leq 1$, we have $1/2\leq 1-\delta_N/2 \leq 1$, and therefore
\begin{displaymath}
\frac{\mathcal{C}\mathfrak{S}}{2}\cdot\delta_N^s N^{s-k(k+1)/2}\ll\mathcal{N}.
\end{displaymath}
Theorem \ref{thm2} now follows by taking $K=(\mathcal{C}\mathfrak{S}/4)^{2^{k+1}}$ in (\ref{equation34}).

The contrapositive of this result implies that if $\mathcal{A}_N^s$ contains no non-trivial solutions to the system (\ref{equation1}), then the set $\mathcal{A}_N$ is not $\mathfrak{a}$-uniform of degree $k$ for $\mathfrak{a}$ satisfying the bound stated in Theorem \ref{thm2}.  We note that since the uniformity parameter $\mathfrak{a}$ is bounded above by 1, we may assume henceforth that
\begin{equation}
\delta_N<\left(\frac{\mathcal{C}\mathfrak{S}}{4}\right)^{-1/(s_0(k)+2)}.
\label{equation39}
\end{equation}
We observe that this bound is trivial if the right-hand side exceeds 1.

We now recall the following result from \cite{gowers2}.

\begin{lemma}
If $\mathcal{A}_N$ is not $\mathfrak{a}$-uniform of degree $k$ for a given value of $\mathfrak{a}$, then there is a proper arithmetic progression $\mathcal{P}$ of cardinality at least $N^{\mathfrak{b}}$, where $\mathfrak{b}=\mathfrak{a}^{2^{2^{k+8}}}$, such that
\begin{displaymath}
\left|\mathcal{A}_N\cap\mathcal{P}\right|\geq\left|\mathcal{P}\right|\left(\delta_N+\mathfrak{b}\right).
\end{displaymath}
\label{lem6}
\end{lemma}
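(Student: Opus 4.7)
The plan is to reproduce the central density-increment argument from \cite{gowers2}, since Lemma \ref{lem6} is essentially a restatement of Gowers' core structural result in the setting of an interval rather than $\mathbb{Z}/N\mathbb{Z}$. The proof naturally splits into an analytic phase (converting failure of uniformity into correlation with a polynomial phase) and a combinatorial phase (converting that correlation into a density increment on a proper arithmetic progression).

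First I would unpack the hypothesis using the definition in (\ref{equation7}): failure of $\mathfrak{a}$-uniformity of degree $k$ says that the $(k+1)$-fold sum of $\Delta_{k+1}(\mathcal{E}_N(x);\bfw)$ exceeds $\mathfrak{a}N^{k+2}$, which is the interval analogue of $\|\mathcal{E}_N\|_{U^{k+1}}^{2^{k+1}} \gg \mathfrak{a}$. Then I would induct on $k$. The base case is quadratic ($k=1$ or $k=2$ depending on indexing), where a large Gowers norm produces, via Cauchy--Schwarz and Weyl's inequality applied to $\Delta_1\mathcal{E}_N$, correlation of $\mathcal{E}_N$ with a linear phase on a long arithmetic progression; this is the step that ultimately forces the tower-type dependence. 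The inductive step writes $\|\mathcal{E}_N\|_{U^{k+1}}^{2^{k+1}}$ as an average of $\|\Delta_h \mathcal{E}_N\|_{U^k}^{2^k}$, so for many shifts $h$ the derivative correlates with a polynomial phase of degree $k-1$; this correlation is recorded as a function $\phi(h)$ taking values in the frequency set.

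Next comes the combinatorial heart of the argument: one must show that the frequencies $\phi(h)$ vary polynomially in $h$, not arbitrarily. This is done by a repeated Cauchy--Schwarz plus additive combinatorics: one extracts a Bohr/Freiman-type additive structure in the graph of $\phi$ via the Balog--Szemer\'{e}di--Gowers theorem, then invokes Freiman's theorem to contain $\phi$'s range in a generalized arithmetic progression of bounded rank. Pigeonholing over this GAP allows gluing of the polynomial phases of degree $k-1$ on each fibre into a single polynomial phase of degree $k$ that correlates with $\mathcal{E}_N$ on a large sub-structure.

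Finally I would pass from correlation with a polynomial phase of degree $k$ to a density increment on a genuine proper arithmetic progression. The step is a Weyl-type linearization: on any arithmetic progression short enough that the degree-$k$ polynomial phase varies by at most $o(1)$, the phase is essentially constant, so correlation of $\mathcal{E}_N$ with the phase translates directly into $|\mathcal{A}_N \cap \mathcal{P}| \geq |\mathcal{P}|(\delta_N + \mathfrak{b})$ on some translate $\mathcal{P}$. Partitioning $[1,N]$ into such progressions and pigeonholing yields the desired $\mathcal{P}$, and tracking the losses at each Cauchy--Schwarz, each application of Balog--Szemer\'{e}di--Gowers, and each application of Freiman produces the explicit exponent $\mathfrak{b} = \mathfrak{a}^{2^{2^{k+8}}}$. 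The main obstacle — and the source of that tower — is the additive-combinatorial extraction of the proper progression from the GAP produced by Freiman's theorem, since Plunnecke--Ruzsa and Freiman each cost a further exponentiation in the uniformity parameter. Because this entire machinery is carried out in detail in \cite{gowers2} and the only change needed here is the trivial passage from $\mathbb{Z}/N\mathbb{Z}$ to $[1,N]$ (absorbed into constants once one restricts to the middle third of the interval), I would simply cite that paper for the bound as stated.
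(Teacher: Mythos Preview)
Your proposal is correct and lands in the same place as the paper: both arguments ultimately reduce to citing Gowers' density-increment theorem (Theorem 18.2 of \cite{gowers2}). However, the paper's proof is far more economical and handles the $\mathbb{Z}/N\mathbb{Z}$-versus-$[1,N]$ transfer differently from what you sketch. Rather than reproducing any of the inductive Cauchy--Schwarz/Balog--Szemer\'{e}di--Gowers/Freiman machinery, the paper simply embeds $\mathcal{A}_N\subset[1,N]$ into $\mathbb{Z}/N\mathbb{Z}$ as a set $\mathcal{A}_N^*$, observes that the equivalence of the interval and cyclic definitions of $\mathfrak{a}$-uniformity (already recorded around (\ref{equation7})--(\ref{equation18})) means $\mathcal{A}_N^*$ fails to be $\mathfrak{a}$-uniform in Gowers' sense, applies Theorem 18.2 of \cite{gowers2} verbatim to obtain a proper progression $\mathcal{P}^*\subset\mathbb{Z}/N\mathbb{Z}$ with the stated density increment, and then notes that a \emph{proper} progression in $\mathbb{Z}/N\mathbb{Z}$ lifts unchanged to a progression $\mathcal{P}\subset[1,N]$. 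There is no need for a middle-third restriction or any adjustment of constants: the properness of $\mathcal{P}^*$ is exactly what makes the lift lossless. Your lengthy outline of Gowers' internal argument is accurate but superfluous once you decide to cite the result, and the cleaner embedding-and-lift step is worth knowing since it avoids the bookkeeping your middle-third suggestion would entail.
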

\begin{proof}
We first embed the set $\mathcal{A}_N\subset [1,N]$ into $\mathbb{Z}/N\mathbb{Z}$, calling the new set $\mathcal{A}_N^*$.  Recalling the discussion of uniformity of degree $k$ surrounding the definition (\ref{equation7}), we note that the hypothesis that $\mathcal{A}_N$ is not $\mathfrak{a}$-uniform of degree $k$ for a given value of $\mathfrak{a}$ implies that $\mathcal{A}_N^*$ is not $\mathfrak{a}$-uniform of degree $k$ in the sense of Gowers.  We may therefore suppose that $\mathcal{A}_N^*$ is not $\mathfrak{a}$-uniform of degree $k$ for the given value of $\mathfrak{a}$.

By the proof of Theorem 18.2 in \cite{gowers2}, we know that if $\mathcal{A}_N^*$ is not $\mathfrak{a}$-uniform of degree $k$ for some value of $\mathfrak{a}$, then there exists a proper arithmetic progression $\mathcal{P}^*$ of cardinality at least $N^{\mathfrak{b}}$ such that
\begin{displaymath}
\left|\mathcal{A}_N^*\cap\mathcal{P}^*\right|\geq\left|\mathcal{P}^*\right|\left(\delta_N+\mathfrak{b}\right).
\end{displaymath}
As $\mathcal{P}^*$ is a proper arithmetic progression, it may be associated to an identical arithmetic progression $\mathcal{P}\subset[1,N]$ such that
\begin{displaymath}
\left|\mathcal{A}_N\cap\mathcal{P}\right|\geq\left|\mathcal{P}\right|\left(\delta_N+\mathfrak{b}\right).
\end{displaymath}
This $\mathcal{P}$ is the progression we seek.
\end{proof}

Define
\begin{displaymath}
\gamma(k)=2^{2^{k+8}+k+1},\qquad \mathcal{K}=\left(\frac{\mathcal{C}\mathfrak{S}}{4}\right)^{\gamma(k)},\qquad C=\left(s_0(k)+2\right)\gamma(k).
\end{displaymath}
It follows by Lemma \ref{lem6} that if the set $\mathcal{A}_N$ fails to be $\mathfrak{a}$-uniform of degree $k$ for any value of $\mathfrak{a}\leq(\mathcal{C}\mathfrak{S}\delta_N^{s_0(k)+2}/4)^{2^{k+1}}$, then there exists a proper arithmetic progression $\mathcal{P}_1$ such that the cardinality of $\mathcal{P}_1$ obeys the lower bound
\begin{displaymath}
\left|\mathcal{P}_1\right|\geq N^{\mathcal{K}\delta_N^C},
\end{displaymath}
and such that the density of $\mathcal{A}_N$ in $\mathcal{P}_1$ obeys the lower bound
\begin{displaymath}
\frac{\left|\mathcal{A}_N\cap\mathcal{P}_1\right|}{\left|\mathcal{P}_1\right|}\geq \delta_N+\mathcal{K}\delta_N^C.
\end{displaymath}
Let $N_0=N$ and $N_1=|\mathcal{P}_1|$.  Since $\mathcal{P}_1$ is a translation and dilation of the interval $[1,N_1]$, we may map $\mathcal{P}_1$ to the interval $[1,N_1]$ by reversing the translation and dilation.  This process maps the intersection $\mathcal{A}_{N_0}\cap\mathcal{P}_1$ to a subset $\mathcal{A}_{N_1}$ of $[1,N_1]$, the density $\delta_{N_1}$ of which is
\begin{displaymath}
\delta_{N_1}=\frac{\left|\mathcal{A}_{N_1}\right|}{N_1}=\frac{\left|\mathcal{A}_{N_0}\cap\mathcal{P}_1\right|}{\left|\mathcal{P}_1\right|}.
\end{displaymath}

We now iterate this process.  After $r+1$ iterative steps, we are concerned with the set $\mathcal{A}_{N_r}\subset[1,N_r]$, which has density $\delta_{N_r}=|\mathcal{A}_{N_r}|/N_r$.  If $\mathcal{A}_{N_r}$ is not $\mathfrak{a}$-uniform of degree $k$ for any value of $\mathfrak{a}$ satisfying the upper bound
\begin{displaymath}
\mathfrak{a}\leq\left(\frac{\mathcal{C}\mathfrak{S}}{4}\delta_{N_r}^{s_0(k)}\right)^{2^{k+1}},
\end{displaymath}
then we may find a proper arithmetic progression $\mathcal{P}_{r+1}\subset[1,N_r]$ of cardinality $N_{r+1}$, where
\begin{displaymath}
N_{r+1}\geq N_r^{\mathcal{K}\delta_{N_r}^C},
\end{displaymath}
such that the density $\delta_{N_{r+1}}$ of $\mathcal{A}_{N_r}$ in $\mathcal{P}_{r+1}$ obeys the lower bound
\begin{displaymath}
\delta_{N_{r+1}}\geq\delta_{N_r}+\mathcal{K}\delta_{N_r}^C.
\end{displaymath}
Since $\delta_{N_{r+1}}\geq\delta_{N_r}$ for all $r\geq 0$, it follows that
\begin{displaymath}
\delta_{N_{r+1}}\geq\delta_{N_r}+\mathcal{K}\delta_{N_0}^C\geq\delta_N+r\mathcal{K}\delta_N^C,
\end{displaymath}
where we recall that we have defined $N_0=N$.  We may therefore perform at most $(\mathcal{K}\delta_N^C)^{-1}$ iterations of the concentration process before the density reaches 1.  Moreover, at each step of the iterative process, the size of the ambient set is raised to a power of at least $\mathcal{K}\delta_N^C$.

Let
\begin{displaymath}
Y=\inf_{\bfy}\left(\max_{1\leq i\leq s} y_i\right),
\end{displaymath}
where the infimum is taken over all non-trivial solutions $\bfy\in\mathbb{N}^s$ to the system (\ref{equation1}).  We note that if $Y=1$, then the system (\ref{equation1}) has no non-trivial solutions, while if $Y=2$, then any set containing more than two elements necessarily furnishes non-trivial solutions to the system (\ref{equation1}) by the property of translation and dilation invariance.  We may therefore assume that $Y\geq 3$.

Now, if the densities of the sets $\mathcal{A}_{N_r}$ in the iterative process reach 1 before the size of the ambient sets $[1,N_r]$ becomes less than $Y$, then the original set $\mathcal{A}_N$ necessarily furnishes a non-trivial solution to the system (\ref{equation1}).  Define
\begin{displaymath}
D=\mathcal{K}\delta_N^C.
\end{displaymath}
We note that by our assumption (\ref{equation39}) on the size of $\delta_N$, we must have $D\leq 1$.  Suppose that
\begin{equation}
\delta_N>\left(\frac{1}{2\mathcal{K}}\right)^{1/C},
\label{equation41}
\end{equation}
so that $D>1/2$.  Then after two iterations of the concentration process, we obtain an arithmetic progression $\mathcal{P}_2$ of length at least $N^{1/4}$ such that the density of $\mathcal{A}_N$ in $\mathcal{P}_2$ is 1.  Hence, if $N\geq Y^4$ and $\delta_N$ satisfies the bound in (\ref{equation41}), the set $\mathcal{A}_N$ necessarily furnishes a non-trivial solution to the system (\ref{equation1}).

We now suppose that (\ref{equation41}) does not hold, so that $D<1/2$.  We will obtain a non-trivial solution in $\mathcal{A}_N^s$ to the system (\ref{equation1}) provided that
\begin{displaymath}
N^{D^{D^{-1}}}\geq Y,
\end{displaymath}
or, equivalently,
\begin{displaymath}
N\geq Y^{D^{-D^{-1}}}.
\end{displaymath}
Taking the logarithm of both sides twice, we obtain
\begin{displaymath}
\log\log N\geq-\frac{\log D}{D}+\log\log Y>-\frac{\log D}{D},
\end{displaymath}
where the second inequality follows from the assumption that $Y\geq 3$.  As we have assumed that $D<1/2$, we have
\begin{displaymath}
\log\log N>\frac{1}{2D}.
\end{displaymath}
By the definition of $D$, this is equivalent to
\begin{displaymath}
\log\log N\gg\delta_N^{-C},
\end{displaymath}
where the implied constants depend on the $\lambda_i$ and $k$.

The contrapositive of this result implies that if $\mathcal{A}_N$ does not furnish any non-trivial solutions to the system (\ref{equation1}), then $\delta_N$ must satisfy
\begin{displaymath}
\delta_N\ll(\log\log N)^{-1/C}.
\end{displaymath}
We may verify by a straightforward computation that $C<2^{2^{k+9}}$, whence we have
\begin{displaymath}
\delta_N\ll(\log\log N)^{-2^{-2^{k+9}}}.
\end{displaymath}
Theorem \ref{thm1} therefore follows by taking $c=2^{-2^{k+9}}$.

\end{document}